\newlength{\halfbls}\setlength{\halfbls}{.5\baselineskip}
\newtheorem{theorem}{Theorem}[section]
\newtheorem{prop}[theorem]{Proposition} 
\newtheorem{cor}[theorem]{Corollary}
\newtheorem{lemma}[theorem]{Lemma}
\newtheorem*{defn}{Definition}
\theoremstyle{remark}
\newtheorem*{remark}{Remark}
\newcommand{\BB}{\mathbb{B}}
\newcommand{\PP}{\mathbb{P}}
\newcommand{\HH}{\mathbb{H}}
\newcommand{\sS}{\mathbb{S}}
\newcommand{\QQ}{\mathbb{Q}}
\newcommand{\RR}{\mathbb{R}}
\newcommand{\CC}{\mathbb{C}}
\newcommand{\NN}{\mathbb{N}}
\newcommand{\BaBo}{{\rm BB}}
\newcommand{\SL}{\mathrm{SL}}
\newcommand{\SO}{\mathrm{SO}}
\newcommand{\SU}{\mathrm{SU}}
\newcommand{\U}{\mathrm{U}}
\newcommand{\tX}{\widetilde{X}}
\renewcommand{\Im}{\mathrm{Im\,}}       
\DeclareMathOperator{\vol}{vol}
\def\be{\begin{equation}}   \def\ee{\end{equation}}     
\def\bes{\begin{equation*}}    \def\ees{\end{equation*}}
\def\ba{\be\begin{aligned}} \def\ea{\end{aligned}\ee}   
\def\bas{\bes\begin{aligned}}  \def\eas{\end{aligned}\ees}
\newcommand{\frakO}{\mathfrak{O}}
\newcommand{\ve}{\varepsilon}
\def\be{\begin{equation}}   \def\ee{\end{equation}}     \def\bes{\begin{equation*}}    \def\ees{\end{equation*}}
\def\ba{\be\begin{aligned}} \def\ea{\end{aligned}\ee}   \def\bas{\bes\begin{aligned}}  \def\eas{\end{aligned}\ees}
\renewcommand{\subsubsection}{\@startsection{subsubsection}{2}%
        {\z@}{-3.25ex plus -1ex minus-.2ex}{-1em}{\bf}}
\title[Self-intersection numbers of Shimura curves]{Bounded negativity of self-intersection numbers of Shimura curves on Shimura surfaces}
\thanks{The first named author is partially supported
by the ERC-StG 257137.  
The second named author is partially supported by  Simons Foundation Collaboration Grant 208853}
\author{ Martin M{\"o}ller and Domingo Toledo}
\date{\today}
\begin{document} 

\begin{abstract}
Shimura curves on Shimura surfaces have been a candidate for
counterexamples to the bounded negativity conjecture. We prove
that they do not serve this purpose: there are only finitely
many whose self-intersection number lies below a given bound.
\par
Previously, this result has been shown in \cite{negative} 
for compact Hilbert modular surfaces using the Bogomolov-Miyaoka-Yau inequality. 
Our approach uses equidistribution and works uniformly for all
Shimura surfaces.
\end{abstract}

\maketitle

\section*{Introduction}

Let $X$ be a  
Shimura surface not isogeneous to a product, i.e.\ an algebraic surface which is the
quotient of a two dimensional Hermitian symmetric space $G/K$ by an irreducible arithmetic lattice in $G$. 
The aim of this note is to show that Shimura curves on such a Shimura surface do not 
provide a counterexample to the bounded negativity conjecture. More precisely we show:
\par
\begin{theorem} \label{thm:intro_main}
For any Shimura surface $X$ not isogeneous to a product 
and for any real number  $M$ there are only finitely many compact Shimura curves 
$C$ on  $X$ with $C^2 < M$. 
\end{theorem}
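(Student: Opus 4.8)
The plan is to convert the self-intersection number into the self-intersection of a cohomology class and then to control that class asymptotically by equidistribution. I argue by contradiction: suppose there were infinitely many compact Shimura curves $C$ with $C^2<M$. Choosing a smooth projective compactification $\bX$ of $X$ (needed only when $X$ is non-compact, e.g.\ in the Hilbert modular case) and extending the natural automorphic line bundles, I regard $C^2=[C]\cdot[C]$ as computed with the intersection pairing on $H^2(\bX,\RR)$. A standard finiteness input is that for every $V_0$ there are only finitely many compact Shimura curves of volume at most $V_0$ (such a curve comes from a sub-datum of bounded discriminant). Hence from the assumed infinite family I may extract a sequence of pairwise distinct curves $C_n$ with $\vol(C_n)\to\infty$, where $\vol$ denotes the area induced from the locally symmetric metric.

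The reduction is then purely formal, once one has the right equidistribution statement. Since $H^2(\bX,\RR)$ is finite dimensional and the cup-product pairing is continuous, it suffices to prove that the normalized classes $[C_n]/\vol(C_n)$ converge in $H^2(\bX,\RR)$ to a class $\beta$ with $\beta^2>0$: then $C_n^2/\vol(C_n)^2\to\beta^2>0$, so $C_n^2\to+\infty$, contradicting $C_n^2<M$. This would simultaneously prove the theorem and show that, away from finitely many curves, $C^2$ is in fact large and positive.

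So everything rests on the equidistribution step: the normalized currents of integration $\frac{1}{\vol(C_n)}[C_n]$, which are closed positive $(1,1)$-currents of bounded mass, converge weakly to a positive multiple of the invariant Kähler form $\omega$, so that $[C_n]/\vol(C_n)\to c\,[\omega]$ with $c>0$. Here the hypothesis that $X$ is \emph{not isogeneous to a product}, i.e.\ that the lattice is irreducible, enters decisively. First, it is what allows one to invoke the measure-rigidity machinery (Ratner's theorems together with the nondivergence and linearization results of Mozes--Shah, Dani--Margulis and Eskin--McMullen) to conclude that a sequence of distinct compact Shimura curves of volume tending to infinity cannot remain trapped in a proper special subvariety and must equidistribute toward the normalized homogeneous measure. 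Second, it pins down the limit: the limiting current is $G$-invariant, and irreducibility forces its class to be a positive multiple of the ample class $[\omega]$, for which $[\omega]^2>0$. Concretely, in the bidisk model one has $[\omega]=[\omega_1]+[\omega_2]$ with $[\omega_i]^2=0$ and $[\omega_1]\cdot[\omega_2]>0$, and one checks that $C_n\cdot L_i/\vol(C_n)$ tends to a common positive limit for the two automorphic line bundles $L_1,L_2$ (the two projections of a totally geodesic curve are isometric), so $\beta$ is the balanced, and hence positive-square, invariant class; in the ball model there is a single ample $[\omega]$ and the statement is immediate.

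The main obstacle is exactly this equidistribution input, and in particular making it unconditional in the non-compact case. One must rule out escape of mass into the cusps of $X$ when normalizing and passing to the weak limit, since mass lost at the boundary of $\bX$ could destroy the identity $C_n^2/\vol(C_n)^2\to\beta^2$; this is where the quantitative nondivergence estimates are essential. One must also verify that the limiting measure is genuinely the full homogeneous measure rather than a measure supported on a lower-dimensional orbit closure, i.e.\ that the \emph{primitive} part of $[C_n]/\vol(C_n)$ tends to zero. Granting these two points, the positivity $[\omega]^2>0$ together with continuity of the intersection form delivers the conclusion. I expect the verification of the hypotheses of the measure-classification theorems for the specific sequences of Shimura curves, uniformly across the two types of Shimura surfaces, to be the technically delicate part.
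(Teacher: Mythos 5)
Your overall strategy is the same as the paper's: use measure-rigidity/equidistribution to show that the normalized classes $[C_n]/\vol(C_n)$ of any sequence of pairwise distinct Shimura curves converge to a positive multiple of $[\omega]$, deduce $C_n^2\sim c\,\vol(C_n)^2\to\infty$ by continuity of the cup product on the finite-dimensional $H^{1,1}$, and handle the cusps via a compactification in which the boundary contribution is controlled. (Two cosmetic differences: the paper does not assume a separate ``bounded volume implies finitely many curves'' input --- it derives $\vol(C_n)\to\infty$ from equidistribution itself, Corollary~\ref{cor:vol} --- and it argues directly rather than by contradiction.)

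However, there is a genuine gap exactly at the step you yourself flag and then grant: the passage from equidistribution of the \emph{measures} $\nu_{C_n}$ on $X$ to convergence of the \emph{currents} $[C_n]/\vol(C_n)$, equivalently to the vanishing of the primitive part of the limit. Weak convergence of $\nu_{C_n}$ to $\nu_X$ controls $\frac{1}{\vol(C_n)}\int_{C_n} f\,\omega_{C_n}$ for continuous functions $f$ on $X$, but for a general $(1,1)$-form $\alpha$ one has $\alpha|_{C_n}=\varphi_\alpha([TC_n])\,\omega_{C_n}$, where $\varphi_\alpha$ is a function of the tangent \emph{direction} and hence lives on $\PP TX$, not on $X$; so your assertion that ``the limiting current is $G$-invariant'' does not follow from equidistribution on $X$, and is the whole point at issue. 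The paper's mechanism is to prove equidistribution on $\Gamma\backslash G/K'$ for a smaller compact subgroup $K'$ (Proposition~\ref{prop:equidistK}): in the ball case $\Gamma\backslash G/K'=\PP TX$, and the pointwise identity $(\omega\wedge\alpha)_x=\bigl(\int_{\PP T_xX}\varphi_\alpha\,\eta_x\bigr)(\omega\wedge\omega)_x$ (trace equals fiber average, Lemma~\ref{le:omegaalpha}) converts $\int_X\omega\wedge\alpha$ into an integral of the globally defined function $\varphi_\alpha$ against the invariant measure, to which the lifted equidistribution applies. In the bidisk case an extra wrinkle appears that your sketch misses: $K$ does not act transitively on $\PP T_x\tX$, so $\PP TX$ is not homogeneous, and one must instead use the $G$-homogeneous circle sub-bundle $\sS TX$ of balanced directions, which happens to contain the tangent directions of all Shimura curves (this, not just the equality $C\cdot L_1=C\cdot L_2$, is what kills the primitive part). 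Without this lift your argument only controls the component of $[C_n]$ in the span of the invariant forms, and the claimed convergence $[C_n]/\vol(C_n)\to c[\omega]$ in all of $H^{1,1}(X)$ remains unproved.
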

\par
The bounded negativity conjecture claims that for any smooth projective
algebraic surface $X$  there is a positive constant $B$ so that  for any   
irreducible curve $C$ on $X$  the self-intersection $C^2 \ge - B$. 
We emphasize that the above theorem does not decide 
the validity on any Shimura surface, as there could exist
non-Shimura curves with arbitrarily negative self-intersection.
\par
There are two possibilities for the uniformization of $X$. The first case
are Shimura surfaces uniformized by $\HH^2$.  In this case 
$G  = \SL_2(\RR)^2$ and the surfaces are called {\em quaternionic
Shimura surfaces} if $\Gamma$ is cocompact and {\em Hilbert modular
surfaces} if $\Gamma$ has cusps. The second case are  Shimura surfaces 
uniformized by the complex $2$-ball $\BB^2$. In this case 
$G= \SU(2,1)$  and the surfaces are called {\em Picard modular
surfaces}. There are compact and non-compact Picard modular surfaces.
The assumption on the Shimura surface is necessary, since the theorem is certainly 
false in the product situation e.g.\ for $X = X(d) \times X(d)$
a product of modular curves or a finite quotient of such a surface: the fibre classes
give infinitely many curves with self-intersection zero.
\par
While only the case of compact $X$ is relevant to the bounded negativity conjecture, the proofs for non-compact $X$  are the same. When both $X$ and the curves $C$ are allowed to have cusps the proper formulation is needed, see Theorem \ref{thm:main_noncompact}.
\par
Theorem~\ref{thm:intro_main} was proven for compact Shimura surfaces 
uniformized by $\HH^2$ in \cite{negative}. The methods there, based on
the logarithmic Bogomolov-Miyaoka-Yau inequality, do not extend
to the ball quotient case. Here we give a uniform treatment of
both cases based on equidistribution results. As in loc.~cit.\ 
we obtain as a consequence:
\par
\begin{cor}
There are only finitely many Shimura curves on $X$ that are
smooth.
\end{cor}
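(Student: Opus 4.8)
The plan is to reduce the statement to Theorem~\ref{thm:intro_main} by showing that \emph{every} smooth Shimura curve has strictly negative self-intersection; finiteness of the smooth curves is then immediate, as they all lie in the finite set of curves with $C^2<0$. Throughout I write $\nu\colon\widetilde C\to X$ for the totally geodesic immersion of the abstract Shimura curve $\widetilde C$ (a quotient of $\HH$ by an arithmetic lattice) whose image is $C$, and I let $g$ denote the genus of $\widetilde C$.

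First I would apply adjunction to $C$. Its arithmetic genus satisfies $2p_a(C)-2=C^2+K_X\cdot C$, while $p_a(C)=g+\delta$, where $\delta\ge 0$ is the $\delta$-invariant measuring the self-intersections of the immersion $\nu$. The hypothesis that $C$ is smooth forces $\delta=0$, so that $p_a(C)=g$ and
\[
C^2 \;=\; (2g-2)-K_X\cdot C .
\]
The next step is to evaluate both terms on the right in terms of the hyperbolic volume $\vol(C)$ of $C$ for its induced symmetric metric. Because $C$ is totally geodesic, its induced Gaussian curvature equals the holomorphic sectional curvature of $X$ along $TC$, a negative constant; Gauss--Bonnet then gives $2g-2=\alpha\,\vol(C)$ with $\alpha>0$ depending only on the uniformizing space. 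On the other side the canonical class is proportional to the Ricci form (equivalently, to the K\"ahler form), and restricting this proportionality to $C$ yields $K_X\cdot C=\beta\,\vol(C)$ with $\beta>0$.

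Everything then hinges on the curvature comparison $\alpha<\beta$, which is where I expect the real work to lie. The point is that $\beta$ is governed by $\Ric(iZ,iZ)$ for a unit complex tangent vector $Z$ to $C$, and for the Hermitian symmetric spaces in question this is a sum of \emph{negative} sectional curvatures that includes the holomorphic one controlling $\alpha$; hence $|\Ric|$ strictly dominates and $\alpha<\beta$. It is exactly here that the assumption that $X$ is not isogeneous to a product enters: for a factor curve in $\HH\times\HH$ one would find $\alpha=\beta$ and $C^2=0$, so the strict inequality relies on the irreducibility of the lattice, which excludes such factor curves. Granting $\alpha<\beta$, I obtain $C^2=(\alpha-\beta)\vol(C)<0$ for every smooth Shimura curve, and Theorem~\ref{thm:intro_main} applied with $M=0$ finishes the proof. (In the non-compact case one invokes instead the corresponding statement of Theorem~\ref{thm:main_noncompact}.)
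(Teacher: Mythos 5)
Your argument is correct and is precisely the one the paper has in mind: the corollary is stated as following ``as in loc.~cit.'' from \cite{negative}, where smoothness plus adjunction, Gauss--Bonnet and the Hirzebruch proportionality $K_X|_C\sim\mathrm{const}\cdot\omega|_C$ give $C^2=(\alpha-\beta)\vol(C)<0$, after which Theorem~\ref{thm:intro_main} with $M=0$ finishes the proof. You also correctly locate where the ``not isogeneous to a product'' hypothesis enters (ruling out factor curves, for which $\alpha=\beta$), so nothing essential is missing.
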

\par
Intersection numbers of Shimura curves are known to appear as coefficients
of modular forms and coefficients of modular forms are known to grow.
This, however, does not directly give a method to prove 
Theorem~\ref{thm:intro_main}, 
since in these modularity statements (\cite{HZ}, \cite{Kudla}) the
Shimura curves are packaged to reducible curves $T_N$ with an unbounded
number of components as $N \to \infty$, while
the statement here is for every individual Shimura curve.
\par
{\bf Acknowledgements.}\ This note is the result of an MFO-miniworkshop
``The bounded negativity conjecture''. We thank the organizers of this workshop,
and the MFO administration for simultaneously organizing a miniworkshop
``K\"ahler groups'', that brought the two authors together. We thank Alex Eskin
for discussing details on equidistribution results, and Madhav Nori and Matthew Stover for discussions on algebraic group details.  The second named author thanks Misha Gromov for a conversation in 1990 on the idea that Shimura curves approximate the K\"ahler class.
\par

\section{Shimura curves on Shimura surfaces not isogeneous to a product}

An {\em  Shimura surface  not isogeneous to a product} is a connected algebraic surface that can
be written as a quotient $X = \Gamma \backslash G /K$, where $G = G_\QQ(\RR)$
is the set of $\RR$-valued points in a connected semisimple $\QQ$-algebraic group $G_\QQ$, 
where $K \subset G$ is a maximal compact subgroup and where $\Gamma$ is 
an irreducible arithmetic lattice in $G$. Here  a lattice is called irreducible 
if it does not have a finite index subgroup that splits as a product of two lattices.
\par
Our geometric definition of Shimura varieties differs from the arithmetic
literature on this subject where Shimura varieties are typically not connected.
It is the point of view of the bounded negativity conjecture that requires
to deal with irreducible components of the objects in question. Note that
we do not require $\Gamma$ to be a congruence subgroup either.
\par
\begin{defn}
A {\em Shimura curve} $C$ is an algebraic curve in $X$ which is 
given as follows. There exists a $\QQ$-algebraic group $H_\QQ$ containing
an arithmetic lattice $\Delta$ and admitting a $\QQ$-morphism $\tau: H_\QQ \to G_\QQ$ 
such that $\tau(\Delta) \subset \Gamma$, such that $\tau$-preimage of a
maximal compact subgroup $K \subset G_\RR$ is a maximal compact subgroup $K_H \subset H= H_\QQ(\RR)$
and $C = \Delta \backslash H / K_H$.
\end{defn}
\par
The aim of this section is to compile the list of possible constructions of
Shimura surfaces that contain infinitely many Shimura curves and the possible
pairs $(G_\QQ, H_\QQ)$. This will be used in the equidistribution theorem in the
next section. More precisely, we need that all Shimura curves can be generated
as the orbit of a fixed subgroup. For this purpose we write $G = G_0 \times W$ with $W$ compact and $G_0$ without compact factors. There is a corresponding
decomposition of the compact subgroup $K = K_0 \times W$ and also
for the Shimura curve $H = H_0 \times W_H$ and $K_H = K_{H,0} \times W_{H}$.
\par
It turns out that there are only two possibilities
for $G_0$ and for each of them, we can construct all Shimura curves as follows.
\par
\begin{prop}\label{prop:constructcurves}
For a given  Shimura surface  $X = \Gamma \backslash G_0 /K_0 =   \Gamma \backslash G/K$  
not isogeneous to a product there exists subgroup $H_0 \cong \SL_2(\RR)$ of $G_0$ such that all Shimura curves arise as $C = \Gamma \backslash \Gamma g H_0/K_{H_0}$ for some $g \in G_0$.
\end{prop}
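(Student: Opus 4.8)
The plan is to reduce to the two possible shapes of $G_0$ and then to classify, up to the action of $G_0$, the holomorphic totally geodesic curves in the symmetric space $G_0/K_0$; the fixed subgroup $H_0$ will be the non-compact part of the stabilizer of one standard such curve. First I would record the reduction to two cases. Since the compact factor $W$ acts trivially on the symmetric space, $G/K = G_0/K_0$ is a Hermitian symmetric space of the non-compact type of complex dimension two. The only irreducible Hermitian symmetric domain of complex dimension two is the ball $\BB^2$, which gives $G_0 = \SU(2,1)$; otherwise the domain is the product $\HH \times \HH$, which gives $G_0 \cong \SL_2(\RR)^2$, and the irreducibility of $\Gamma$ then forces $G_\QQ$ to be $\QQ$-simple, i.e.\ a restriction of scalars from a real quadratic field. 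This is exactly the dichotomy recalled in the introduction.

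Next I would show that each Shimura curve is a holomorphic, totally geodesic copy of $\HH$. For a Shimura curve $C = \Delta \backslash H/K_H$ the homomorphism $\tau$ induces a totally geodesic map $H/K_H \to G_0/K_0$ carrying the base point $eK_H$ to $o := eK_0$, because $\tau^{-1}(K) = K_H$. Letting $L \subset G_0$ denote the non-compact part of the projection to $G_0$ of the image $\tau(H)$, the image of $C$ in $G_0/K_0$ is the orbit $Y := L\cdot o$. As $C$ is by definition an algebraic, hence complex, curve, $Y$ is a complex one-dimensional totally geodesic submanifold, so $Y \cong \HH$, the group $L$ is up to isogeny $\SL_2(\RR)$, and the embedding is holomorphic.

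This second step is where I expect the main work to lie: one must match the abstract datum $(H_\QQ,\tau)$ to the geometric classification, and in particular exclude the two competing families of totally geodesic real surfaces. In the ball case one must rule out the totally real Lagrangian planes $\SO(2,1)\cdot o$, which are excluded precisely because they are not complex submanifolds and hence not algebraic curves. In the product case one must rule out the fibres $\{pt\}\times\HH$; these are excluded since $\QQ$-simplicity of $G_\QQ$ prevents $\tau(H_\QQ)(\RR)$ from lying in a single $\SL_2(\RR)$-factor (equivalently, a fibre is not a closed curve when $\Gamma$ is irreducible).

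Finally I would invoke transitivity to put $Y$ in standard position. Fix $H_0\cong\SL_2(\RR)$ to be the non-compact part of the stabilizer in $G_0$ of a standard complex geodesic $Y_0 = H_0\cdot o$ through $o$: namely $H_0 = \SU(1,1)$ preserving a linear $\BB^1\subset\BB^2$ in the ball case, and the diagonal $H_0=\{(t,t)\}$ in the product case. It is classical that $G_0$ acts transitively on holomorphic totally geodesic complex geodesics: directly on complex geodesics in the ball, and in the product case because every holomorphic graph $z\mapsto(z,\phi z)$ with $\phi\in\PSL_2(\RR)$ is the image of the diagonal under some $(\id,\tilde{\phi})\in G_0$. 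Hence the curve $Y$ attached to $C$ equals $g\cdot Y_0 = gH_0\cdot o$ for some $g\in G_0$, and since $gH_0\cdot o = gH_0K_0/K_0 = gH_0/K_{H_0}$, projecting to $X$ yields $C = \Gamma\backslash\Gamma gH_0/K_{H_0}$, as required. Note that only $g\in G_0$, and not a rational point, is needed for this statement.
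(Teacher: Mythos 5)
Your proof is correct and follows essentially the same route as the paper: identify Shimura curves with totally geodesic complex curves, classify these (complex geodesics in $\BB^2$; factors versus holomorphic graphs in $\HH\times\HH$), discard the factor case by irreducibility, and realize the remaining family as a single $G_0$-orbit by transitivity --- the paper merely makes the conjugating element $g$ explicit via an orthogonal basis in the ball case. One small slip worth fixing: in Case One the totally real field $F$ need not be quadratic (for $[F:\QQ]>2$ the quaternion algebra is just ramified at all but two infinite places), though this parenthetical plays no role in your argument.
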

\par 
We start with the possibilities for $G_0$. 
There are only two hermitian symmetric domains of dimension two. This leads
to the following two cases, as in the introduction. In each case we give a description of the possible Shimura surfaces.  Here, and elsewhere, the description of the algebraic groups in question will always be given only up to central isogeny.
\smallskip
\par
{\bf Case One, $G_0= \SL_2(\RR)^2$:} There two possibilities. Either 
$G$ is  the set of $\RR$-points of the $\QQ$-algebraic group 
$G_\QQ = {\rm Res}_{F/\QQ}(\SL_2(A))$ for a quaternion algebra $A$ over
a totally real field $F$ which is unramified at exactly
two infinite places of $F$ or $G$ is the product  
${\rm Res}_{F/\QQ}(\SL_2(A_1)) \times {\rm Res}_{F/\QQ}(\SL_2(A_2))$
for two quaternion algebras, each unramified at exactly at one infinite place.
	For the proofs, first remark that these give $F$-forms of $\SL_2(\RR)^2$, see, e,g, \cite[IV.1]{Vigneras}. That these are the only possibilities follows from the classifcation of algebraic groups \cite{Tits}.  In more detail, the procedure of \cite[\S 3.1]{Tits} reduces the problem to the classification of $F$-forms of $\SL_2$.   The description in \cite[III.1.4]{Serre} of the $F$-forms of $\SL_2$ in bijective correspondence with quaternion algebras over $F$ gives the above description of the algebraic groups.    In both cases, the maximal compact
subgroup $K$ in $G$ is $\SO_2(\RR)^2$ times the compact factors of $G_\RR$. 
\par 
In the product case, all lattices are reducible, so we can discard this case
in view of our irreducibility hypothesis on $X$. In the remaining case, 
in order obtain an arithmetic lattice $\Gamma \subset G$ one has
to fix an order $\frakO \subset A$ and let $\frakO^1 \subset \frakO$
be the elements of reduced norm $1$. Then $\Gamma$ is the image
in $G$ of a group commensurable to $\frakO^1$.
See e.g.~\cite{Vigneras} for more details.
\smallskip
\par
{\bf Case Two, $G_0=\SU(2,1)$:} In this case, the underlying 
$\QQ$-algebraic group is $G_\QQ = {\rm Res}_{F_0/\QQ}(G_{F_0}))$, and, from the classification of algebraic groups (over number fields), see \cite{Tits,PlatRap},  we see that, in the notation of p. 55 of  \cite{Tits}, $G_{F_0}$ must be of type $^2 A^{(d)}_{2,r}$,  where $d|3$, $d\ge 1$, $2rd\le 3$.  In other words, $G_{F_0} = SU(h)$ where 
$h$ is a hermitian form constructed as follows. 
Start with a totally real field $F_0$  and take a totally complex quadratic
extension $F/F_0$, i.e.\ $F$ is a CM field.  Then take a central simple division algebra $D$ of degree $d$ (hence dimension $d^2$) over $F$, with center $F$ and involution $\sigma$ of the second kind (not the identity on $F$), and a hermitian form $h$ on $D^{3/d}$ so that $h$ is isotropic at one real place of $F_0$ and definite at all other real places (equivalently, isotropic at one conjugate pair of complex places of $F$, definite at all other pairs).
\par
Thus there are two \lq\lq types" corresponding to the two possibilities   $d= 1$ or $d = 3$:
\par
The {\em first type} means that $d =1$.  Then  $D = F$ and  $h$ is a hermitian form on $F^3$ that
is definite except for one pair of places of $F$, interchanged
by complex conjugation. Then $\SU(h)$ is indeed a $F_0$-algebraic
group and the set of $\RR$-valued points of ${\rm Res}_{F_0/\QQ}(\SU(h))$
equals $G$ up to compact factors. The compact
subgroup $K$ in $G$ is $S(U(2)\times U(1))$ times 
the compact factors of $G_\RR$.  
Arithmetic lattices $\Gamma$ {\em of the first type} are obtained by fixing an
order $\frakO \subset F$ and taking $\Gamma$  commensurable to 
 $G \cap \SL_3(\frakO)$.   The integer $r$ above satisfying $2rd\le 3$ is the $F_0$-rank of $G_{F_0}$, or the dimension of the maximal isotropic subspace  of $h$ in $F^3$.  The lattice is co-compact if and only if $r = 0$, and $r=1$ forces $F_0 =\QQ$.
\par
The {\em second type} means that $d=3$, thus $D$ is   central simple division 
algebra of degree $3$  (dimension $9$) over $F$ with an involution \lq\lq of the second kind".  The lattices $\Gamma$ are obtained by fixing an order $\frakO\subset D$ and taking $\Gamma$ commensurable with $G\cap SL(D)$.  Observe that in this case the inequality $2rd\le 3$ forces $r = 0$ and therefore $\Gamma$ is always co-compact.  We will see that lattices of the second type do not have any Shimura curves, so we will not need to consider them.

\par
\smallskip
{\bf Shimura curves in $X$ for $G_0 = \SL_2(\RR)^2$}. 
The Shimura curves in $X$ are totally geodesic complex curves in $X$, 
so they are projections to $X$ of totally geodesic holomorphic disks  
$\HH  \subset\HH^2$, which in turn are orbits of embeddings of 
$\SL_2(\RR) \subset \SL_2(\RR)^2$.  It is well known that, up to biholomorphic isometries, there are only two classes of such disks:  factors and diagonals.   By the irreducibility hypothesis, 
the inclusion into one factor does not come from a morphism of the 
underlying $\QQ$-algebraic groups.   
So $H_0 \subset G_0$ has to be the diagonal embedding, proving 
Proposition~\ref{prop:constructcurves} in this case. In fact, the 
possible embeddings are discussed in great detail in \cite{vandergeer88}
for Hilbert modular surfaces and in \cite{granath} for quaternionic
Shimura surfaces.
\par
\smallskip
{\bf Shimura curves in $X$ for $G_0 = \SU(2,1)$}.   
Fix a Shimura surface $X$ obtained by choosing 
$F_0, F,d, D, \sigma,   h ,\frakO \subset D, \Gamma$.  The Shimura curves, being totally geodesic complex curves, are projections to $X$ of orbits in the universal cover of subgroups $H\subset G_0$, all isomorphic to $SU(1,1)$ and standardly embedded in $SU(2,1)$.   The image in $X$ of an $H$-orbit is a Shimura curve if and only if $H\cap \Gamma$ is a lattice in $H$.  This happens if and only if $H$ is defined over $F_0$, meaning that the underlying algebraic group $G_{F_0}$ contains an $F_0$-subgroup $H_{F_0}$ so that, if $\iota: F_0\to \RR$ is the embedding of $F_0$ with group of real points $G_{F_0,\iota}(\RR)$ isomorphic to $G_0$, the inclusion  $H_{F_0,\iota}(\RR)\subset G_{F_0,\iota}(\RR)$ agrees with $H\subset G_0$.   There are two cases:
\smallskip
\par
{\bf No Shimura curves in Shimura surfaces of the second type}: The group $SU(h)$, for $h$ a hermitian form on a central simple division algebra $D$ over $F$ of degree three as above, has no subgroup $H_{F_0}$ defined over $F_0$ with $H_{F_0}(\RR) = SU(1,1)$ standardly embedded in $SU(h)(\RR) = SU(2,1)$.
\par
This is well-known to experts, but we do not know a reference (but see \cite[Corollary~4.2]{GaribaldiGille} for a more general result).  Matthew Stover kindly 
communicated the following proof.

Let $F_0,F,D,\sigma$ be as above. The $D$-valued hermitian form $h$ can be taken to be $h(x,y) = \sigma(x) y$ and the group of $F_0$-points of the $F_0$-group in question is
$$
SU(D,\sigma) (F_0) = \{x\in D: \sigma(x)x = e, \ Nrd(x) = 1\}\subset D.
$$
which gives us an $SU(2,1)$ as follows:  choose an embedding $F\to \CC$, use it to form $D\otimes_F\CC$ which becomes isomorphic to the algebra $M(3,\CC)$ of three by three complex matrices, under an isomorphism (unique up to conjugation by Skolem-Noether), which takes $\sigma$ to conjugate-transpose with respect to a hermitian form $h'$.  Whenever all choices can be made so that $h'$ has signature $(2,1)$  the group of real points of $SU(D,\sigma)$ becomes the standard $SU(2,1)$.  The signature of the hermitian form $h'$ depends just on $D,\sigma$ and the embedding $F\to\CC$.
\par 
Note that the $F$-algebra $D$ is embedded in the algebra  $M(3,\CC)$ by $x \to x\otimes 1$.   The $F$-vector subspace of $M(3,\CC)$ generated by the subset $SU(D,\sigma)(F_0)$ is easily seen to be a $\sigma$-stable subalgebra of $M(3,\CC)$ contained in the division algebra $D$, hence it is itself a division algebra, and easily seen to equal $D$.   Suppose $H_{F_0}$ is an $F_0$-subgroup of $SU(D,\sigma)$ so that the corresponding inclusion of real points is a standard embedding of $SU(1,1)$ in $SU(2,1)$, all inside $M(3,\CC)$, and let $V$ be the $F$-vector subspace of $M(3,\CC)$ generated by the $F_0$-points of $H_{F_0}$.  This is a non-commutative division subalgebra of $D$, and it must be a proper subalgebra because $V\otimes_F\CC$ is a proper subspace of $D\otimes_F\CC = M(3,\CC)$.  Since $D$ has degree $3$, it has no proper non-commutative $F$-subalgebras, so such subgroups cannot exist.
\par
\smallskip

\par
{\bf Classification of Shimura curves in Shimura surfaces of the first type}:  In this case there are always infinitely many Shimura curves.   We continue the same notation, extend the hermitian form $h$ on $L^3$ to $\CC^3$ and   interpret the unit ball 
$G_0/K_0 \cong \BB^2\subset\PP^2$ as  the collection of $h$-negative lines in $\CC^3$.
The Shimura curves in $X$ arise as the quotient of  totally geodesic disks
$\BB^1 \subset\BB^2$ and such disks are in bijective correspondence with the $h$-positive lines.  Namely, an 
 $h$--positive line $l$ determines the hermitian space  $(\ell^\perp,h|_{ l^\perp})$ 
of signature $(1,1)$, and the corresponding space of negative lines $\BB^1_l\subset \BB^2$.  All geodesic disks arise this way.  The groups  $G_\ell$, 
the stabilizer of $\ell$  (isomorphic to $U(1,1)$) and the subgroup $H_l$ fixing $l$ pointwise (isomorphic to $SU(1,1)$) act on  $(\ell^\perp,h|_{ l^\perp})$ and 
  $\BB^1_\ell$, both actions being transitive on $\BB^1_l$. 
The disk $\BB^1_\ell$ projects to a Shimura curve in $X$.  
if and only if  $H_\ell\cap \Gamma$ a  lattice in $H_\ell$, in turn: 
\par
\begin{lemma} \label{le:closedgeodesic}
The group $H_\ell\cap\Gamma$ is a lattice in $H_\ell$ if and only if $\ell$ 
is an $F$-rational line, that is,  $\ell\cap  F^3\ne \{0\}$. 
\end{lemma}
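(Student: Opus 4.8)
The plan is to prove the two implications separately, writing $\Lambda := H_\ell\cap\Gamma$ and exploiting throughout the interplay between the $F$-rational structure on $\CC^3$ and the Zariski closure of $\Lambda$. For the \emph{if} direction, suppose $\ell = Fv$ with $0\neq v\in F^3$ and $h(v,v)>0$. Because the Gram matrix of $h$ and the vector $v$ are $F$-rational and the involution preserves $F$, the linear form $w\mapsto h(v,w)$ has coefficients in $F$; hence $\ell^\perp$ is $F$-rational and the orthogonal splitting $\CC^3=\ell\oplus\ell^\perp$ is defined over $F$. As $h|_{\ell^\perp}$ is nondegenerate, the pointwise stabilizer of $\ell$ in $\SU(h)$, namely $\SU(h|_{\ell^\perp})$, is an $F_0$-subgroup $H_{\ell,F_0}\subset G_{F_0}$ whose real points at the distinguished place give $H_\ell$, while at every other real place of $F_0$ the form $h$ is definite, so $h|_{\ell^\perp}$ is definite and the corresponding factor is the compact group $\SU(2)$. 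By Borel--Harish-Chandra the integral points of $\mathrm{Res}_{F_0/\QQ}H_{\ell,F_0}$ form a lattice in $\SU(1,1)\times\prod\SU(2)$; projecting away the compact factors, a homomorphism with finite kernel and compact fibres, yields a lattice in $H_\ell$ commensurable with $\Lambda$.

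For the \emph{only if} direction, suppose $\Lambda$ is a lattice in $H_\ell\cong\SU(1,1)$. Since $\SU(1,1)$ is semisimple with no compact factors, the Borel density theorem shows that $\Lambda$ is Zariski dense in $H_\ell$. As the locus of group elements fixing a given vector is Zariski closed, the common fixed space of $\Lambda$ in $\CC^3$ coincides with that of $H_\ell$. Decomposing $\CC^3=\ell\oplus\ell^\perp$ as an $H_\ell$-module, the group acts trivially on $\ell$, whereas the central element of $H_\ell$ acting as $-\id$ on $\ell^\perp$ leaves no nonzero invariant there; thus $(\CC^3)^{H_\ell}=\ell$ and therefore $(\CC^3)^{\Lambda}=\ell$.

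It remains to descend this to $F$. After replacing $\Lambda$ by the finite-index subgroup $\Lambda\cap\SL_3(\frakO)$, which is again a Zariski-dense lattice, every element of $\Lambda$ has entries in $F$, so $(\CC^3)^{\Lambda}$ is the complex solution space of a linear system with coefficients in $F$. Such a solution space is defined over $F$, i.e.\ $(F^3)^{\Lambda}\otimes_F\CC=(\CC^3)^{\Lambda}=\ell$, and hence $(F^3)^{\Lambda}$ is a one-dimensional $F$-subspace spanning $\ell$; in particular $\ell\cap F^3\neq\{0\}$, so $\ell$ is $F$-rational. I expect the \emph{only if} direction to be the main obstacle: the step that converts the analytic lattice hypothesis into algebraic Zariski density, via Borel density, and then into $F$-rationality. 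In the \emph{if} direction the one delicate point is the bookkeeping of the compact factors needed to see that the projection to $H_\ell$ remains a lattice.
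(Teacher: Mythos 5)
Your proof is correct. The \emph{if} direction is essentially the argument in the paper: both identify the pointwise stabilizer of an $F$-rational positive line with the $F_0$-group $\SU(h|_{\ell^\perp})$, which is $\SU(1,1)$ at the distinguished place and compact at all others, and invoke arithmeticity (Borel--Harish-Chandra) to produce a lattice; the paper is a bit more careful about why $H_\ell\cap\Gamma$ is commensurable with the integral points of that subgroup, working with the finite-index $\frakO$-submodule $M=\frakO v\oplus(v^\perp\cap\frakO^3)$ of $\frakO^3$ --- a bookkeeping step you assert rather than carry out, but which is standard. The \emph{only if} direction is where you genuinely diverge. The paper avoids Borel density: it picks a single hyperbolic element $\gamma$ in the commutator subgroup of $\Gamma_\ell$ (which exists because $\Gamma_\ell$ is a lattice in $\SU(1,1)$), notes that its eigenvalues on $\ell^\perp$ have absolute value $\ne 1$ while it fixes $\ell$, so $1$ is a simple eigenvalue and $\ell=\ker(\gamma-\id)$ is cut out over $F$ by that one matrix. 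You instead apply the Borel density theorem to get Zariski density of $\Lambda$ in $H_\ell$, identify $(\CC^3)^{\Lambda}=(\CC^3)^{H_\ell}=\ell$ (correctly using the element acting as $-\id$ on $\ell^\perp$ to kill invariants there), and descend to $F$ using that the whole linear system $\gamma w=w$, $\gamma\in\Lambda\cap\SL_3(\frakO)$, has coefficients in $F$. Both arguments are sound and end with the same rationality principle (the fixed space of $F$-rational matrices is $F$-rational); yours is more conceptual and generalizes readily, while the paper's is more elementary in that it needs only one well-chosen group element rather than a density theorem.
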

\par
\begin{proof}
Let $v\in \CC^3$ be a basis vector for $\ell$, and suppose that  
$\Gamma_\ell = H_\ell\cap\Gamma$ is a lattice in $H_\ell$.  Since 
$\Gamma_\ell$  leaves $\ell$ stable, $v$ is an eigenvector for all  
$\gamma\in H_\ell \cap\Gamma$, in other words, there is a homomorphism
$\lambda :\Gamma_\ell\to\U(1)\subset \CC^*$ so that 
$\gamma (v) = \lambda (\gamma ) v$ for all $\gamma\in \Gamma_l$.  
Since $\Gamma_\ell$ leaves $\ell^\perp$ invariant, the remaining 
eigenvectors of any $\gamma\in \Gamma_\ell$ lie in $\ell^\perp$.  
Since the action of $H_\ell$ on $l^\perp$ is isomorphic to the 
standard action of $\SU(1,1)$ on $\CC^2$ and $\Gamma_\ell$ is a 
lattice in $H_\ell$, the commutator subgroup of $\Gamma_l$ must 
contain hyperbolic elements.   Fix such an element $\gamma$.  
Then $\lambda(\gamma) = 1$ and  the remaining eigenvalues of $\gamma$ are of absolute value $\ne 1$. Therefore $1$ is a simple eigenvalue of $\gamma$, thus the space of solutions of  $\gamma(v) = v$  is  an $F$-rational line as asserted. 
\par
For the converse, suppose that $\ell$ is a rational line, and let 
$v\in\frak{O}^3$ be a primitive vector which is a basis for $\ell$.  
 Let  $M_0  = \frakO v $ and $M_1 = v^\perp \cap\frak{O}^3$ and let 
$M = M_0\oplus M_1$.   Then $M$ is an $\frakO$-submodule of finite 
index in $\frakO^3$.  Consequently, $\Gamma$ is commensurable with $\Gamma' = \{\gamma\in\SU(h,\frakO):\gamma(M) = M\}$ and $\Gamma\cap H_l$ is commensurable with $\Gamma'_v = \{\gamma\in \Gamma' : \gamma(v) = v\}$, which is a lattice in the group $H_\ell = H_v = \{g\in G: g(v) = v \}$, a group defined over $F_0$, 
and isomorphic 
(over $F_0$) to $\SU(h|_{M_1\otimes F})$. This group in turn is isomorphic 
over $\RR$ to $\SU(1,1)$.  Thus $\Gamma\cap H_\ell$ is a lattice in 
$H_\ell$ and we obtain a Shimura curve associated to the $\QQ$-group 
${\rm Res}_{F_0/\QQ}(\SU(h|_{M_1\otimes F}))$.
\end{proof}
\par
\emph{End of proof of Proposition~\ref{prop:constructcurves}} : Choose 
an orthogonal basis $v_1,v_2,v_3$ for $\frakO^3$ where $h(v_i) = a_i \bar a_i >0$ for $i = 1,2$,  $h(v_3) = -a_3 \bar a_3<0$ and $v_1\in \ell$.  
Let $e_1,e_2,e_3$ be the standard basis for $\CC^3$, let $H = H_{e_1}\subset G$ be the subgroup, isomorphic to $\SU(1,1)$ that fixes $e_1$, and 
let $g\in G$ be the linear transformation that takes $e_i$ to $v_i/a_i$.   
Then $g H g^{-1} = H_\ell$, therefore $H_\ell$ is as asserted in 
Proposition~\ref{prop:constructcurves} 
\par
\begin{remark}
From Lemma~\ref{le:closedgeodesic} we see that the collection of Shimura curves in $X$ is parametrized by the $\Gamma$-equivalence classes of primitive positive vectors in $\frakO^3$, that is, primitive vectors $v\in \frakO^3$ with $h(v) >0$.  The collection of these equivalence classes is commensurable with $SU(h,F)\backslash\PP(F^3)^+$, where $\PP(F^3)^+$ denotes the space of $h$-positive lines in $F^3$.  The class of $h(v)$ gives is a well-defined function $h:\PP(F^3)\to F_0^*/N_{F/F_0}(F^*)$, the \emph{norm residue group}.  It can be checked that the class of $h(v)$ is a commensurability invariant and that it takes on infinitely many values, hence we get an infinite number of commensurability classes of subgroups of $SU(1,1)$.  Observe that the  matrix of the conjugating element $g$ of Lemma \ref{le:closedgeodesic} has entries in the finite field extension $F(a_1, a_2,a_3)$ of $F$.
\end{remark}
\par
\medskip
The compact factors of $G$, necessary for the $\QQ$-structure in the
definition of a Shimura surface, play no role in the sequel. We thus
simplify notation and write from now on $G$ for $G_0$ and $H$ for $H_0$.
\par
\medskip
{\bf Elliptic elements and cusps.} The bounded negativity conjecture (BNC) 
originally is a question for smooth compact (projective) surfaces. If $\Gamma$ is 
cocompact and torsion free, Shimura surfaces as defined above fall into the
scope of this conjecture and the results in the introduction need no explanation.
\par
Any arithmetic lattice contains a neat subgroup of finite index. Such subgroups are
in particular torsion free.  As quotients by a finite group, the Shimura surfaces 
come with a ($\QQ$-valued) intersection theory. The BNC can be extended
to such surfaces, and Theorem~\ref{thm:intro_main} needs no further explanation.
\par
If $\Gamma$ is cofinite but not cocompact, our proof of Theorem~\ref{thm:intro_main}
gives a statement about the self-intersection number of the cohomology class
of the Shimura curve projected to the complement of the cusp resolution 
cycles, as we will now explain.
\par
We may suppose that $\Gamma$ is a neat subgroup. Let $X^\BaBo$ be the minimal 
(Baily-Borel) compactification of $X = \Gamma \backslash G /K$. Since
$X$ is not isogenous to a product, $X^\BaBo \setminus X$ has codimension two and
hence $H^2_c(X,\QQ) \cong H^2(X^\BaBo,\QQ)$.
Let $\pi: Y \to X^\BaBo$ a (minimal) smooth resolution of the singularities at
the cusps and $j: X \to Y$ the inclusion. We claim that 
\begin{equation} \label{eq:H2dec}
H^2(Y,\QQ) = \pi^*H^2(X^\BaBo,\QQ) \oplus B,
\end{equation}
where $B$ is the subspace spanned by cusp resolution curves. Moreover, 
the direct sum is orthogonal and  the intersection form on $B$ is 
negative definite. This implies that
the sum decomposition is compatible with Poincar\'e\ duality and
this will make the arguments in Section~\ref{sec:current_of_integraion} 
work in the non-compact case, too,  see Theorem \ref{thm:main_noncompact}.
\par
Our claims are stated for the Hilbert modular case in \cite[Section~II.3 and
Section~VI.1]{vandergeer88}). In the case of a ball quotient a neighborhood $W$
of the cusps in $Y$ is disjoint union of disc bundles over tori, each sitting inside a line bundle
of negative degree. It suffices show that $H_2(Y,\QQ) = H_2(W,\QQ) \oplus {\rm Im}
(j_*:H_2(X,\QQ) \to H_2(Y,\QQ))$ and then apply duality. By Meyer-Vietoris, it
suffices to show that $H_1(W\cap X,\QQ) \to H_1(W,\QQ) \oplus H_1(X,\QQ)$ is injective. This
holds true, since the inclusion of a circle bundle into the corresponding disc bundle
induces an injection the level of $H_1(\ \ ,\QQ)$.
\par
 We remark that the BNC (and intersection numbers in general) are 
very sensitive to blowups. We leave it to the reader to investigate 
if Theorem~\ref{thm:intro_main} also holds on $Y$.
\par
\medskip
{\bf Volume normalization.} The Hermitian symmetric space
$G/K$ comes with a K\"ahler $(1,1)$-form $\omega$ that we
normalize, say, so that the associated Riemannian metric
has curvature attains the minimum $-1$. Then $\omega \wedge \omega$ provides
a volume form on $X$ and, consequently, also on the universal covering $\tX$.
We let $\vol(X)$ be the volume
of the Shimura surface. Rescaling by the volume we
obtain a probability measure $\nu_X$ on $X$ induced from the
volume form. 
\par
Shimura curves are totally geodesic subvarieties in $X$.
Consequently, the restriction of $\omega$ is a K\"ahler
form $\omega_C$ on $C$. We let $\vol(C) = \int_C \omega_C$
be the corresponding volume and $\nu_C$ the probability
measure defined by $\omega_C$. 
\par
We need to extend this to the quotients by smaller compact
subgroups.  Let $K' \subset G$ be a compact subgroup and $K'_H = K' \cap H$.
Let $\nu_G$ be the Haar measure on $G$ normalized so that the push-forward to $G/K$ 
gives the above volume form on $\tX$ and such that the fibers have volume one. 
From $\nu_G$ we obtain measures
$\nu_{G/K'}$ on $G/K'$ and finite measures $\nu_{\Gamma \backslash G/K'}$ 
on $X_{K'} = \Gamma \backslash G/K'$ with $\vol(X) = \vol(X_{K'})$.
\par
Similarly we fix a normalization of a Haar measure $\nu_H$ on $H$ 
by requiring that the fibers of $H \to H/K_H$ have volume one 
and that the push-forward to $H / K_H$ is the volume form 
coming from the metric with curvature $-1$, as above.
\par
In this way, given a Shimura curve $C = \Gamma \backslash \Gamma g H/K_H$, 
the push-forward of $\nu_H$ defines a finite measure 
$\nu_{C,K'}$ on the locally symmetric subspaces 
$C_{K'} =  \Gamma \backslash \Gamma g H/K_H'$ inside $X_{K'}$
with  $\vol(C_{K'}) = \vol(C)$.
\par

\section{Equidistribution}

There are many sources in the literature that deduce equidistribution
for Shimura curves from a Ratner type theorem (notably \cite{ClUlEqui}, 
\cite{UlEquiII}). We need a slightly
stronger equidistribution result, on $\Gamma \backslash G$
or on on $\Gamma \backslash G /K'$ for some (not necessarily
maximal) compact subgroup $K'$ of $G$ rather than on the algebraic
surface $X$. This follows along known lines from Ratner's
result, or rather the version in \cite{EMSunipotent}. We 
give a proof avoiding technicalities on Shimura data
and focussing on the surface case.
\par
The references above contain as special case the following equidistribution
                   \par
\begin{prop} \label{prop:equidist}
Suppose that $X$ is a Shimura surface.
If $(C_n)_{n \in \NN}$ is a sequence of pairwise
different Shimura curves, then $\nu_{C_n} \to \nu_X$ weakly 
as $n \to \infty$. 
\end{prop}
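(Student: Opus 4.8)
The plan is to transfer the problem from the surface $X$ to the homogeneous space $\Gamma\backslash G$ and then invoke the equidistribution theory for closed orbits of groups generated by unipotents, in the form of Ratner's theorems together with the results of Mozes--Shah and Eskin--Mozes--Shah \cite{EMSunipotent}. By Proposition~\ref{prop:constructcurves} every Shimura curve has the shape $C_n=\Gamma\backslash\Gamma g_n H/K_H$ with $H\cong\SL_2(\RR)$ a fixed subgroup and $g_n\in G$; since $H\cap g_n^{-1}\Gamma g_n$ is a lattice in $H$, the orbit $\Gamma g_n H\subseteq\Gamma\backslash G$ is closed and carries a unique $H$-invariant probability measure $\mu_n$. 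With the normalizations fixed in the previous section, the projection $p\colon\Gamma\backslash G\to X$ satisfies $p_*\mu_n=\nu_{C_n}$ and $p_*\nu_{\Gamma\backslash G}=\nu_X$, where $\nu_{\Gamma\backslash G}$ is the $G$-invariant probability measure. It therefore suffices to prove that $\mu_n\to\nu_{\Gamma\backslash G}$ weakly.

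First I would record that, by the finiteness of closed $H$-orbits of bounded volume (Dani--Margulis), the hypothesis that the $C_n$ are pairwise distinct forces $\vol(C_n)\to\infty$: for each bound $V$ only finitely many closed $H$-orbits have volume at most $V$. Now fix a subsequence along which $\mu_n$ converges weakly, say to $\mu$, and grant for the moment that no mass escapes, so that $\mu$ is a probability measure. As $H\cong\SL_2(\RR)$ is generated by one-parameter unipotent subgroups, the limit $\mu$ of the $H$-invariant measures $\mu_n$ is again $H$-invariant, and by the theorem of Mozes--Shah (a consequence of Ratner's measure classification) it is \emph{homogeneous}: there is a closed subgroup $L$ with $H\subseteq L\subseteq G$ and a point $x_0$ such that $\mu$ is the $L$-invariant probability measure on a closed orbit $x_0L$. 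The accompanying linearization estimates yield the dichotomy that either $L=G$, or infinitely many of the orbits $\Gamma g_n H$ are captured inside proper closed orbits of subgroups containing a conjugate of $H$.

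The next step is to eliminate every proper $L$ using the explicit description of the pair $(G,H)$ from the previous section. In Case One, $G=\SL_2(\RR)^2$ and $H$ is the diagonal, and since $\mathfrak{sl}_2\oplus\mathfrak{sl}_2$ admits no subalgebra strictly between the diagonal $\mathfrak{sl}_2$ and the whole algebra, the only subgroups with $H\subseteq L\subseteq G$ are $L=H$ and $L=G$. In Case Two (necessarily of the first type, the second type carrying no Shimura curves) $G=\SU(2,1)$ and $H\cong\SU(1,1)$ is standardly embedded; the only proper intermediate group is the stabilizer $\U(1,1)$ of the positive line $\ell$ of Lemma~\ref{le:closedgeodesic}, because the subalgebras of $\mathfrak{su}(2,1)$ containing $\mathfrak{su}(1,1)$ are exactly $\mathfrak{su}(1,1)$, $\mathfrak{u}(1,1)$ and $\mathfrak{su}(2,1)$. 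In either case a \emph{proper} $L$ has the same orbit on $G/K$ as $H$ — the extra $\U(1)$ in Case Two is central and acts trivially on the disk $\BB^1_\ell$ — so its image $p(x_0L)\subseteq X$ is a single Shimura curve $C_\infty$. Combined with the dichotomy above, a non-Haar limit would force infinitely many $C_n$ to lie in $C_\infty$, hence to equal $C_\infty$, contradicting pairwise distinctness. The a priori possibility that the $C_n$ instead spread over infinitely many distinct $\U(1,1)$-orbits is precisely the inductive step of the linearization argument: as $\U(1,1)$ is maximal among proper subgroups, those orbits have volume tending to infinity and equidistribute to $\nu_{\Gamma\backslash G}$, carrying $\mu_n$ with them, so the chain $H\subset\U(1,1)\subset G$ terminates at $G$. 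Hence every subsequential limit equals $\nu_{\Gamma\backslash G}$, and $\nu_{C_n}\to\nu_X$.

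The hard part will be the point deferred above, namely ruling out escape of mass into the cusps when $X$ (or a $C_n$) is non-compact; this is exactly why the statement is needed on $\Gamma\backslash G$ rather than merely on $X$. I would supply the required tightness from the uniform quantitative non-divergence estimates for unipotent flows of Dani--Margulis, which guarantee that the unipotent trajectories generating the $\mu_n$ spend a definite proportion of time in a fixed compact subset of $\Gamma\backslash G$, so that no mass can leak into the cusps in the limit. In the cocompact cases this step is vacuous, and the argument above goes through unchanged.
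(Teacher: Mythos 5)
Your overall strategy is the paper's: lift the problem to $\Gamma\backslash G$, establish tightness of the orbit measures $\mu_n$, invoke Ratner/Mozes--Shah (via \cite{EMSunipotent}, \cite{MS}) to conclude that any weak limit is homogeneous on a closed $L$-orbit with $H\subseteq L\subseteq G$, and then eliminate every proper $L$. Your elimination step is in fact carried out more explicitly than the paper's: you classify the intermediate subgroups ($L=H$ in Case One; $L=H$ or $L=\U(1,1)$ in Case Two), observe that the central $\U(1)$ acts trivially on the disk so that a proper $L$-orbit still projects to a single Shimura curve, and derive the contradiction with pairwise distinctness. The paper instead disposes of all proper $L$ at once with Lemma~\ref{le:supergroup}, using the ``moreover'' clause of Lemma~\ref{le:algebraic} (the conjugates $x_nHx_n^{-1}$ cannot all equal $H$). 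Both routes are fine; yours costs a short subgroup classification, the paper's a one-line Lie-bracket argument.

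The genuine gap is in the step you yourself flag as the hard one: non-escape of mass. The Dani--Margulis quantitative non-divergence estimates do \emph{not} by themselves ``guarantee that the unipotent trajectories generating the $\mu_n$ spend a definite proportion of time in a fixed compact subset.'' Those estimates are uniform only over trajectories starting in a fixed compact set (or, in the sharper formulations, they carry an exceptional branch for trajectories trapped near $\Gamma$-rational invariant subvarieties); they do not rule out that an entire closed $H$-orbit sits arbitrarily deep in the cusp, which is exactly the ``escape of mass'' phenomenon. This really happens in general: the introduction to \cite{EMSnondiv} gives examples of sequences of closed $H$-orbits whose normalized measures converge to the point mass at infinity. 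What saves the day here, and what the paper uses, is \cite[Theorem~1.1]{EMSnondiv}: if the image of the centralizer $Z(H)$ in $\Gamma\backslash G$ is compact, then for every $\ve>0$ there is a single compact set $W$ receiving mass at least $1-\ve$ from \emph{every} closed $H$-orbit. One then checks that in both cases at hand $Z(H)$ is itself compact (finite for the diagonal $\SL_2(\RR)\subset\SL_2(\RR)^2$, and $\U(1)$ for $\SU(1,1)\subset\SU(2,1)$). Your proof needs this centralizer verification and the Eskin--Mozes--Shah theorem (or an equivalent argument) in place of the bare Dani--Margulis estimate; as written, the tightness claim is unsupported precisely in the non-cocompact cases where it is needed.
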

\par
This is a special case of the following stronger result.
\par
\begin{prop} \label{prop:equidistK}
Suppose that $X = \Gamma\backslash G/K$ is a Shimura surface. Let $K' \subset K$ be a closed subgroup,  and let 
$g_n \in G$ be a sequence of points so that  the orbits $g_n H\subset G$
project to pairwise distinct Shimura curves $C_n$ in $X$. Then on $X' = \Gamma \backslash G/K'$
the sequence of probability measures $\nu_{C_n,K'}$ converges weakly to $\nu_{\Gamma \backslash G/K'}$
as $n \to \infty$.
\end{prop}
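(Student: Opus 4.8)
The plan is to prove the statement one level higher, on the homogeneous space $\Gamma\backslash G$, and to obtain every quotient version by pushing forward. Write $\mu_n$ for the $H$-invariant probability measure carried by the orbit $\Gamma g_nH\subset\Gamma\backslash G$; this orbit is closed and of finite volume precisely because $C_n$ is a compact Shimura curve, so that $g_n^{-1}\Gamma g_n\cap H$ is a lattice in $H\cong\SL_2(\RR)$. Write $\mu_G$ for the Haar probability measure on $\Gamma\backslash G$ and $p_{K'}\colon\Gamma\backslash G\to X'=\Gamma\backslash G/K'$ for the projection. With the volume normalisations of Section~1 one checks directly that, as probability measures, $\nu_{C_n,K'}=(p_{K'})_*\mu_n$ and $\nu_{\Gamma\backslash G/K'}=(p_{K'})_*\mu_G$. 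Since $p_{K'}$ is continuous and proper, pushing forward is continuous for the weak-$*$ topology; hence it suffices to prove that $\mu_n\to\mu_G$ weakly on $\Gamma\backslash G$. Note that this single statement yields the conclusion for \emph{all} closed $K'\subseteq K$ at once, which is the uniformity the proposition asserts.

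By Proposition~\ref{prop:constructcurves} the group $H\cong\SL_2(\RR)$ is fixed and generated by one-parameter unipotent subgroups; fix one such subgroup $U\subset H$. Each $\mu_n$ is the invariant measure of the horocycle flow on the finite-volume quotient $(g_n^{-1}\Gamma g_n\cap H)\backslash H$, hence is $U$-ergodic by Moore's ergodicity theorem; thus every $\mu_n$ is an ergodic homogeneous measure for a unipotent flow and the machinery of \cite{EMSunipotent} applies. Passing to an arbitrary subsequence, extract a weak-$*$ limit $\mu$. The first point is that no mass escapes to infinity: in the cocompact cases $\Gamma\backslash G$ is compact and there is nothing to check, while in the cusped cases one invokes the quantitative non-divergence of unipotent orbits (Dani--Margulis), in the packaged form of \cite{EMSunipotent}, so that $\mu$ remains a probability measure. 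The Mozes--Shah theorem then shows that $\mu$ is homogeneous: it is the $L$-invariant measure on a single closed orbit $\Gamma x_0L$ for a closed subgroup $L$ with $H\subseteq L\subseteq G$.

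It remains to force $L=G$. First I list the intermediate possibilities. In Case One $H$ is the diagonal $\SL_2(\RR)\subset\SL_2(\RR)^2$, and a short Lie-algebra computation shows it is \emph{maximal}: the diagonal's complement is the irreducible adjoint module, so the ideal generated by any off-diagonal element is everything, whence $L=H$ or $L=G$. In Case Two $H\cong\SU(1,1)$ lies in the chain $H\subset G_\ell\cong\U(1,1)\subset G=\SU(2,1)$ with $G_\ell$ the stabiliser of the positive line $\ell$, and any proper $L\supseteq H$ is contained in some $G_\ell$. In every proper case the orbits of $L$ on the symmetric space $G/K$ have dimension strictly below $\dim_\RR G/K=4$: in Case One the $H$-orbit is a geodesic disc $\HH\subset\HH^2$, and in Case Two both $H$ and $G_\ell$ have orbit the proper disc $\BB^1_\ell\subset\BB^2$. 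Hence, were $L$ proper, the support of $(p_K)_*\mu$, namely the closed image $p_K(\Gamma x_0L)$, would be a proper totally geodesic subvariety of $X$ and $(p_K)_*\mu$ could not have full support. But the equidistribution on $X$ itself is classical (\cite{ClUlEqui,UlEquiII}): $\nu_{C_n}=(p_K)_*\mu_n\to\nu_X$, and by weak-$*$ continuity of $(p_K)_*$ along our subsequence this limit equals $(p_K)_*\mu$, so $(p_K)_*\mu=\nu_X$ has full support. This contradiction forces $L=G$, hence $\mu=\mu_G$. As every subsequence of $(\mu_n)$ thus has a further subsequence converging to $\mu_G$, the whole sequence converges to $\mu_G$, and applying $(p_{K'})_*$ finishes the proof.

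The two genuinely delicate points are isolated above. The first is the exclusion of escape of mass in the non-cocompact (Hilbert modular and non-compact Picard) cases, which relies on the quantitative non-divergence estimates for unipotent flows; working with the fixed semisimple group $H$ rather than a bare unipotent is what keeps these estimates uniform across the family $(g_n)$. The second is the identification of the limit group $L$: the cheap input is the dimension/maximality dichotomy, but converting ``$L$ proper'' into a genuine contradiction uses crucially that the $C_n$ are \emph{pairwise distinct} together with the already-established equidistribution on $X$. Alternatively one may argue purely on $\Gamma\backslash G$, using the refined Mozes--Shah conclusion $\supp(\mu_n)\,\delta_n\subseteq\supp(\mu)$ with $\delta_n\to e$: a proper $L$ would trap all but finitely many of the distinct closed orbits $\Gamma g_nH$ inside the single proper orbit $\Gamma x_0L$, which is impossible.
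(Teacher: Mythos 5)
Your proposal follows the same overall architecture as the paper's proof: reduce to $K'=\{e\}$ by pushing forward, rule out escape of mass, invoke the Ratner/Mozes--Shah machinery to show any weak limit is the $L$-invariant measure on a closed orbit for some $H\subseteq L\subseteq G$ intersecting a conjugate of $\Gamma$ in a lattice, and then force $L=G$. The two delicate steps are handled differently, and each has a soft spot. For non-divergence, your appeal to Dani--Margulis ``packaged in \cite{EMSunipotent}'' is not quite the right tool: escape of mass for a \emph{sequence} of closed $H$-orbit measures is precisely the phenomenon treated in \cite{EMSnondiv}, and the hypothesis to verify there is compactness of the image of the centralizer $Z(H)$ in $\Gamma\backslash G$ (the counterexamples to non-escape arise from pushing orbits off to infinity along non-compact centralizing directions). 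The paper checks this directly: $Z(H)$ is finite in Case One and $\U(1)$ in Case Two. Your remark that working with the semisimple $H$ keeps the estimates uniform gestures at the right idea but does not replace this verification.

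For the identification $L=G$ you take a genuinely different route. The paper uses the Mozes--Shah refinement (that $x_nHx_n^{-1}\subset cLc^{-1}$ eventually) together with pairwise distinctness of the $C_n$ to get $H\subsetneq L$, and then the purely algebraic Lemma~\ref{le:supergroup}; no equidistribution on $X$ itself is used as input. You instead classify the intermediate subgroups and derive a contradiction from the known surface-level equidistribution $\nu_{C_n}\to\nu_X$. This is more economical but less self-contained: the paper deliberately avoids relying on \cite{ClUlEqui,UlEquiII}, which are formulated in the congruence/adelic setting while $\Gamma$ here is an arbitrary irreducible arithmetic lattice, and since Proposition~\ref{prop:equidist} is presented in the paper as a special case of Proposition~\ref{prop:equidistK}, you must make the independent provenance of the surface statement explicit to avoid an appearance of circularity. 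Moreover your classification in Case Two is incomplete as stated: $H\cong\SU(1,1)$ preserves each of the two isotropic lines of $\ell^\perp$ and therefore lies in proper parabolic subgroups of $\SU(2,1)$, which are contained in no $G_\ell$ and act transitively on $\BB^2$, so the ``orbit of dimension $<4$'' dichotomy fails for them; such $L$ are excluded only because $c^{-1}\Gamma c\cap L$ must be a lattice, forcing $L$ to be unimodular, and this should be said. (On the other hand, your explicit treatment of $L=G_\ell\cong\U(1,1)$, whose orbit in $\BB^2$ is still the proper disc $\BB^1_\ell$, covers a case that the paper's Lie-bracket argument passes over rather quickly.) With these two points repaired, your argument is sound.
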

\par
\begin{cor} \label{cor:vol}
Suppose that $X = \Gamma \backslash G / K$
is a Shimura surface. If $(C_n)_{n \in \NN}$ is a sequence of pairwise
different Shimura curves, then $\vol(C_n) \to \infty$ 
as $n \to \infty$.
\end{cor}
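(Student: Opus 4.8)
The plan is to argue by contradiction directly from the equidistribution of Proposition~\ref{prop:equidist}: a sequence of distinct Shimura curves whose volumes stayed bounded would, after passing to a subsequence, converge \emph{geometrically} to a single totally geodesic curve, and the associated probability measures would then converge to a measure supported on that limit curve. Such a limit measure is singular with respect to the smooth volume measure $\nu_X$, contradicting $\nu_{C_n}\to\nu_X$.

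First I would set up the contradiction and fix the basic geometric input. Suppose $\vol(C_n)\not\to\infty$; passing to a subsequence we may assume $\vol(C_n)\le V$ for all $n$ and $\vol(C_n)\to L$ for some $L\in[0,V]$. Since $\Gamma$ may be taken neat, the injectivity radius of $X$ is bounded below by some $\rho>0$. Each $C_n$ is totally geodesic, so over a geodesic ball of radius $\rho$ about any $p\in C_n$ it contains an isometrically embedded totally geodesic disk carrying a fixed constant-curvature metric; hence there is a constant $c_0>0$, independent of $n$ and of $p\in C_n$, with
\[
\vol\bigl(C_n\cap B(p,\rho)\bigr)\ \ge\ c_0 .
\]
Evaluating at a single point gives $\vol(C_n)\ge c_0>0$, so $L\ge c_0>0$. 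Moreover a maximal $\rho$-separated subset of $C_n$ has at most $V/c_0$ points, so $C_n$ is covered by at most $V/c_0$ balls of radius $2\rho$. In the compact case this bounds the diameter of the $C_n$ uniformly; in the non-compact case the same covering argument confines all but a uniformly small fraction of the volume of $C_n$ to a fixed compact set $\Omega\subset X$, the cusp excursions carrying negligible mass.

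Next I would extract a geometric limit. Writing $C_n=\Gamma\backslash\Gamma g_nH/K_H$, the confinement lets me choose the representatives so that the points $\Gamma g_n$ lie in a fixed compact subset of $\Gamma\backslash G$; lifting and passing to a further subsequence, $g_n\to g_\infty$ in $G$. The decisive point is that the \emph{closed} $H$-orbits $\Gamma g_nH$ of volume $\le V$ converge to a \emph{closed} $H$-orbit $\Gamma g_\infty H$, again of finite volume. This is where discreteness of $\Gamma$ enters: the bounded volume forces each orbit to close up through returns $\gamma_n\in\Gamma$ with $\gamma_ng_n=g_nh_n$ for $h_n\in H$ in a fixed compact set, and discreteness of $\Gamma$ prevents these returns from degenerating, so the limit orbit is closed rather than dense. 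Granting this, $C_\infty=\Gamma\backslash\Gamma g_\infty H/K_H$ is a genuine Shimura curve, and since $L>0$ the probability measures converge weakly, $\nu_{C_n}\to\nu_{C_\infty}$.

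Finally, $C_\infty$ is a real two-dimensional totally geodesic subvariety of the four-real-dimensional $X$, so $\nu_{C_\infty}$ is supported on a set of $\nu_X$-measure zero and is therefore singular with respect to the smooth measure $\nu_X$. This contradicts $\nu_{C_n}\to\nu_X$ from Proposition~\ref{prop:equidist}. The main obstacle is precisely the closedness of the limiting orbit: for a generic sequence of Shimura curves the supports spread out and their geometric limit is all of $X$ — which is consistent with, indeed responsible for, equidistribution — and it is only the \emph{bounded-volume} hypothesis, together with the discreteness of $\Gamma$, that confines the limit to a single finite-volume curve and so produces a singular limit measure. I would expect to supply this confinement either by the return-map argument sketched above, or by importing the finiteness of closed orbits of the semisimple group $H$ of bounded volume, a standard consequence of the same Ratner/\cite{EMSunipotent} circle of ideas that underlies Proposition~\ref{prop:equidist}.
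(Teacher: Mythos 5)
Your proof has a genuine gap, and it sits exactly where you say the ``main obstacle'' is. The argument rests on two claims that are never established: (A) the closed orbits $\Gamma g_nH$ of volume $\le V$ converge geometrically to a single closed $H$-orbit $\Gamma g_\infty H$ of finite volume, and (B) consequently $\nu_{C_n}\to\nu_{C_\infty}$. Neither follows from the convergence of base points $g_n\to g_\infty$: an $H$-orbit is a three-dimensional set, and knowing where one point of it accumulates says nothing about where the rest of it goes. In fact (B), as a general implication, is contradicted by the very Proposition~\ref{prop:equidist} you invoke, which says that for \emph{any} sequence of distinct Shimura curves $\nu_{C_n}\to\nu_X$; so the bounded-volume hypothesis must be used to rule out spreading, and that is precisely the step left open. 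Your return-map sketch does not close it: to pin down the limit orbit you would need the whole return lattice $\Lambda_n=g_n^{-1}\Gamma g_n\cap H$ to stabilize along a subsequence, not a single return $\gamma_n$, and that requires a uniform bound (in terms of $V$ and the injectivity radius) on a generating set for a lattice of covolume $\le V$ in $\SL_2(\RR)$ --- in effect the theorem that only finitely many closed $H$-orbits of volume $\le V$ meet a fixed compact set (Dani--Margulis, Mozes--Shah). Importing that theorem, as you suggest at the end, would indeed prove the corollary, but it replaces your argument rather than completing it, and it is a strictly stronger input than the corollary needs. (A smaller point: in the non-compact case the injectivity radius of a neat quotient is \emph{not} bounded below --- it decays in the cusps --- so even your local volume bound needs the non-divergence result of \cite{EMSnondiv} to restrict attention to a compact part.)

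The paper's proof runs in the opposite direction and avoids geometric limits entirely: it uses equidistribution on $\Gamma\backslash G$ (Proposition~\ref{prop:equidistK} with $K'=\{e\}$) \emph{positively}. Given $m$, one places $O(m)$ disjoint flow boxes $U_i=V_i\times W_i$ for the $H$-foliation, with plaques $V_i$ of $\nu_H$-volume bounded below and transversals $W_i$ of measure $O(1/m^2)$; equidistribution forces $C_n$ to meet every $U_i$ for $n$ large, and each visit contributes a definite amount of volume --- your local estimate $\vol\bigl(C_n\cap B(p,\rho)\bigr)\ge c_0$ is exactly this ingredient. Hence $\liminf_n\vol(C_n)\ge c_0\,m$ for every $m$. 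If you prefer your contradiction scheme, the clean repair is not to produce a limit curve $C_\infty$ at all but to make the singularity quantitative: an $H$-orbit of volume $\le V$ has an $\epsilon$-neighborhood of measure $O(V\epsilon^{k})$ with $k$ the codimension of $H$ in $G$, so it cannot be $\epsilon$-dense for $\epsilon$ small depending only on $V$, which already contradicts equidistribution.
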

\par
\begin{proof}[Proof of Corollary~\ref{cor:vol}] With the above volume
normalization, it suffices to prove the claim for the lifts of the Shimura
curves $C_n'$ to $X' = \Gamma \backslash G$. We apply the preceding proposition
for $K' =\{e\}$.
Equidistribution implies in particular that Shimura curves are dense, i.e.\
for any finite collection of open sets $U_i$, $i \in I$. there exists $N_0$ such that
for $n > N_0$ the intersection $C_n \cap U_i$ is non-empty for all $i$. Since
$X'$ is foliated by $H$-orbits and $\nu$ is locally the product of $\nu_G$ and
a transversal measure, it suffices to take for $U_i$ sufficiently many  
open sets locally trivializing the foliation $U_i = V_i \times W_i$ with
$V_i$ an $H$-orbit, such that $\nu_H(V_i) = O(1)$ but the transversal measure
of $W_i$ is $O(1/n^2)$. Then we can fit $O(n)$ such sets into $X$ and each
time $C_n$ intersects some $U_i$, it picks up a volume of $O(1)$. 
\end{proof}  
\par 
\begin{proof}[Proof of Proposition~\ref{prop:equidistK}] We first observe that if the Proposition holds for $K' = \{e\}$, then it holds for any other $K'\subset K$.  Namely, under the projection $\pi:X'' = \Gamma\backslash G\to X' = \Gamma\backslash G /K'$ we have, by the volume normalization above, that the push-forward measure satisfy $\pi_*(\nu_{X''}) = \nu_{X'}$ and $\pi_*(\nu_{C_n,e} )= \nu_{C_n,K'}$ .  Thus we will assume $K' = \{e\}$, and write simply $\nu_n'$ for $\nu_{C_n,e}$ and $X'$ for $\Gamma\backslash G$.
\par 
The proof consists of two parts:  1.  Prove that $\nu_n'$ has convergent subsequences $\nu_{n_j}'$.   2. Prove that the limit of any convergent subsequence must be $\nu_{X'}$.
\par
If $\Gamma$ is co-compact, that is, $X'$ is compact, then the space of probability measures on $X'$ is compact in the weak  * topology, so $\nu_n'$ has a convergent subsequence.  If  $X$ is not compact, then a subsequence converges to a measure on the one point  compactification $X'\cup\{\infty\}$, but these measures may \lq\lq escape to infinity", say converge to the delta function at $\infty$.  An example of this  \lq\lq escape of mass" is given in the introduction to \cite{EMSnondiv}.  The main result there  is that there is no escape of mass when the image of $Z(H)$  in $X'$ is compact (where $Z(H)$ is the centralizer of $H$ in $G$

).  More precisely,  compactness of the image of  $Z(H)$  in $X'$  implies (see \cite[Theorem~1.1]{EMSnondiv})
that for every $\ve>0$ there exists a compact subset $W \subset \Gamma
\backslash G$ such that every $H$-orbit gives measure at least
$1-\ve$ to $W$. Hence the sequence $\nu_n'$ indeed converges in the space of probability measures on $X'$.
\par

In our situation $Z(H)$ itself is compact:  it is finite in Case 1 and $U(1)$ in Case 2, thus we always have convergence, thereby proving (1).  (Compactness of $Z(H)$ generally holds for Shimura varieties if one
discards the obvious exception of product situations, see \cite{UlEquiII}.)

\par

To prove (2) we may assume $\nu_n'$ converges weakly to a probability measure $\nu'$, we must prove $\nu' = \nu_{X'}$.  This follows a pattern which is by now standard: (i) use, as in \cite{EMSunipotent}, Ratner's theorem on  unipotent flows to prove that $\nu$ 
is {\em algebraic}, i.e.\ supported on an $L$-orbit of some
connected algebraic group $H \subseteq L \subseteq G$ that intersects
$\Gamma$ in a lattice.  (ii) Prove $L=G$.   We formulate (i) as the following lemma:

\par
\begin{lemma} \label{le:algebraic}
Suppose $\nu_n'$ converges weakly to $\nu'$. Then there exists a closed connected subgroup $L$, $H\subset L\subset G$, such that $\nu'$ is an $L$-invariant measure supported on $\Gamma\backslash\Gamma c L$ for some $c\in G$ and such that  $c^{-1}\Gamma c \cap L$ is a lattice in $L$.  Moreover, there exists a sequence $x_n\in \Gamma g_n H$ converging to $c$ and an $n_0$ such that $c L c^{-1}$ contains the subgroup generated by $x_n H x_n^{-1}$ for $n\ge n_0$.
\end{lemma}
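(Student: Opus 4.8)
\emph{Proof plan.} The plan is to exhibit each $\nu_n'$ as an algebraic measure that is ergodic under a unipotent flow, and then feed the whole sequence into Ratner's measure classification in the form used in \cite{EMSunipotent}. First I would record the precise structure of $\nu_n'$. Since $C_n$ is a Shimura curve, the orbit $\Gamma\backslash\Gamma g_n H$ is closed and $g_n^{-1}\Gamma g_n\cap H$ is a lattice in $H\cong\SL_2(\RR)$, so $\nu_n'$ is exactly the $H$-invariant homogeneous probability measure carried by that orbit. Because $H$ is generated by Ad-unipotent one-parameter subgroups and acts ergodically on $(g_n^{-1}\Gamma g_n\cap H)\backslash H$ by Moore's ergodicity theorem, the measure $\nu_n'$ is ergodic under some Ad-unipotent one-parameter subgroup $u(t)\subset H$. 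Thus every $\nu_n'$ lies in the class of algebraic measures to which the limit theorems for unipotent flows apply.

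Next I would invoke non-divergence. We have already arranged, using the compactness of $Z(H)$ and \cite{EMSnondiv}, that no mass escapes, so the weak limit $\nu'$ is again a probability measure. With this in hand, the key input is the theorem of Mozes and Shah, in the version used in \cite{EMSunipotent}, that the weak-$*$ limit of a sequence of algebraic measures, each ergodic under a unipotent flow, is again algebraic as soon as there is no escape of mass. Applying it to $\nu_n'\to\nu'$ produces a closed connected subgroup $L\subseteq G$ and an element $c\in G$ such that $\nu'$ is the $L$-invariant homogeneous measure supported on the closed orbit $\Gamma\backslash\Gamma c L$, with $c^{-1}\Gamma c\cap L$ a lattice in $L$. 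This is the first assertion of the lemma.

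The \emph{moreover} clause would come from the same linearization argument, which is not merely an existence statement. Representing the closed orbits $\Gamma\backslash\Gamma g_n H$ by the points of a suitable exterior power of $\mathfrak g$ that carry the Lie algebras $\mathrm{Ad}(\gamma_n g_n)\,\mathfrak h$, and using that these accumulate on the point representing $\mathrm{Ad}(c)\,\mathfrak l$, one extracts (after passing to a subsequence) a sequence $x_n\in\Gamma g_n H$ with $x_n\to c$ and an index $n_0$ beyond which $x_nHx_n^{-1}\subset cLc^{-1}$; since $cLc^{-1}$ is a group it then contains the subgroup generated by $x_nHx_n^{-1}$. Finally, letting $n\to\infty$ in this inclusion and using $x_n\to c$ together with the closedness of $cLc^{-1}$ gives $cHc^{-1}\subseteq cLc^{-1}$, hence $H\subseteq L$, which completes the proof.

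I expect the genuine obstacle to be this \emph{moreover} clause rather than the bare algebraicity of $\nu'$: extracting a convergent sequence of base points $x_n\to c$ together with the uniform containment $x_nHx_n^{-1}\subset cLc^{-1}$ requires the full linearization machinery of Dani--Margulis and Mozes--Shah, whereas algebraicity alone follows formally once unipotent ergodicity and non-escape of mass are in place. Everything else---identifying $\nu_n'$ as homogeneous, checking unipotent ergodicity via Moore, and passing to the limit to obtain $H\subseteq L$---is routine bookkeeping.
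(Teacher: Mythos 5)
Your proposal is correct and follows essentially the same route as the paper: identify $\nu_n'$ as the $H$-invariant homogeneous measure on the closed orbit $\Gamma\backslash\Gamma g_n H$, use Moore ergodicity to verify that each $\nu_n'$ is ergodic under a unipotent one-parameter subgroup of $H\cong\SL_2(\RR)$, rule out escape of mass via compactness of $Z(H)$, and then apply the Mozes--Shah theorem (in the form of \cite[Theorem~1.1]{MS}, following \cite[Proposition~2.1]{EOHecke}), whose conclusion already contains both the algebraicity of $\nu'$ and the \emph{moreover} clause about the base points $x_n\to c$ with $x_nHx_n^{-1}\subset cLc^{-1}$. Your added remark that the containment $H\subseteq L$ follows by passing to the limit in that inclusion is a correct and slightly more explicit rendering of what the paper leaves implicit.
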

\par

We formulated this lemma following closely the wording of \cite[Proposition~2.1]{EOHecke} (see also \cite[Theorem~1.7]{EMSunipotent}) because it can be proved from \cite[Theorem~1.1]{MS} in same way.  Namely, start from the fact that $\nu_n'$  
 is supported on the $H$-orbit $\Gamma\backslash \Gamma g_n H\subset \Gamma\backslash G$ which is isomorphic to $(g_n^{-1}\Gamma g_n \cap H)\backslash H$ and is $H$-invariant.  Since $g_n^{-1} \Gamma g_n$ is a lattice in $H$, which, in our case, is locally isomorphic to $SL(2,\RR)$,  we can  choose a unipotent one-parameter subgroup $u(t)$ in $H$, apply  the Moore ergodicity theorem, as in the proof of \cite[Proposition~2.1]{EOHecke}, to show that $\nu_n'$ is an ergodic $u(t)$-invariant measure, thus checking that the first hypothesis of \cite[Theorem~1.1]{MS} is satisfied.  We continue, in this way, following the proof of \cite[Proposition~2.1]{EOHecke} until the proof of Lemma \ref{le:algebraic} is complete.
\par

\par 
Finally the groups $x_n Hx_n^{-1}$ cannot all be equal to $H$ since
this would give $\gamma_n\in\Gamma$ so that $g_n H g_n ^{-1} = \gamma_n H \gamma_n ^{-1}$, contradicting the hypothesis that the curves $C_n$ are
pairwise different. We conclude 
that $H \subsetneq L$ and thus $L = G$ by Lemma~\ref{le:supergroup}:
\end{proof}
\par
\begin{lemma} \label{le:supergroup}
Let $(G,H)$ be as in Case One or Case Two.
If $L$ is a connected real Lie group with $H \subsetneq L \subset G$
and $\Gamma \cap L$ is a lattice in $L$, then $L = G$.
\end{lemma}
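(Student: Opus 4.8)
The plan is to pass to Lie algebras and classify the possible intermediate objects. Since $L$ is connected it is determined by $\mathfrak{l}=\mathrm{Lie}(L)$, and $H\subseteq L$ forces $\mathfrak{h}=\mathrm{Lie}(H)\subseteq\mathfrak{l}$. Because $\mathfrak{l}$ is a subalgebra we have $[\mathfrak{h},\mathfrak{l}]\subseteq\mathfrak{l}$, so $\mathfrak{l}$ is an $\mathrm{ad}(\mathfrak{h})$-invariant subalgebra; equivalently $\mathfrak{l}/\mathfrak{h}$ is an $\mathfrak{h}$-submodule of $\mathfrak{g}/\mathfrak{h}$. I would therefore first decompose $\mathfrak{g}/\mathfrak{h}$ as a module over $\mathfrak{h}\cong\mathfrak{sl}_2(\RR)$ and then single out which submodules actually lift to subalgebras. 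Once I know that the only subalgebras with $\mathfrak{h}\subseteq\mathfrak{l}$ are $\mathfrak{h}$, $\mathfrak{g}$, and (in Case Two) $\mathfrak{u}(1,1)$, the lemma reduces to excluding the middle possibility.

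In Case One, where $G=\SL_2(\RR)^2$ and $H$ is the diagonal, a complement of $\mathfrak{h}$ is the antidiagonal copy of $\mathfrak{sl}_2(\RR)$, on which $\mathrm{ad}(\mathfrak{h})$ acts by the adjoint representation, i.e.\ the $3$-dimensional irreducible $\mathfrak{sl}_2$-module. Hence $\mathfrak{g}/\mathfrak{h}$ is irreducible, so $\mathfrak{l}/\mathfrak{h}$ is either $0$ or everything, giving $\mathfrak{l}=\mathfrak{h}$ or $\mathfrak{l}=\mathfrak{g}$. Since $H\subsetneq L$ we get $\mathfrak{l}=\mathfrak{g}$ and then $L=G$; the lattice hypothesis is not even needed here.

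In Case Two, $G=\SU(2,1)$ and $H\cong\SU(1,1)$ is the standard copy fixing a positive line $\ell$ and acting on $\ell^\perp$. Writing matrices in the block form adapted to $\ell\oplus\ell^\perp$ yields an $\mathrm{ad}(\mathfrak{h})$-decomposition $\mathfrak{su}(2,1)=\mathfrak{h}\oplus\mathfrak{z}\oplus\mathfrak{m}$, where $\mathfrak{z}=\mathrm{Lie}(Z(H))$ is the trivial $1$-dimensional module and $\mathfrak{m}$ is the off-diagonal part, on which $\mathfrak{h}$ acts by the standard representation on $\CC^2=\ell^\perp$. As a real $\mathfrak{h}$-module $\mathfrak{m}\cong W\otimes_\RR\RR^2$, two copies of the irreducible $2$-dimensional module $W$, so a priori there is a $\PP^1(\RR)$-family of $2$-dimensional submodules $V\subset\mathfrak{m}$ to worry about. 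The key observation I would use to kill all of them at once is that $\mathrm{ad}$ of a generator of $\mathfrak{z}$ acts on $\mathfrak{m}$ by multiplication by a nonzero imaginary scalar, i.e.\ gives an $\mathfrak{h}$-invariant complex structure $J$ on $\mathfrak{m}$, and that $J$ preserves the $\mathfrak{h}$-invariant symplectic form $\omega$ coming from the imaginary part of the Hermitian form on $\ell^\perp$. Writing $\omega=\omega_W\otimes q$ with $\omega_W$ the area form on $W$ and $q$ a symmetric form on the $2$-dimensional multiplicity space, compatibility with $J$ forces $q$ to be definite, since a complex structure preserving $q$ exists only when $q$ is anisotropic. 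Now the $\mathfrak{h}$-component of the bracket $\wedge^2 V\to\mathfrak{h}$ vanishes by Schur (because $\wedge^2 V$ is trivial while $\mathfrak{h}$ is the adjoint module), so the only obstruction to $\mathfrak{h}\oplus V$ being a subalgebra is the $\mathfrak{z}$-component of $[V,V]$, which is $q$ evaluated on the line defining $V$; definiteness of $q$ shows this never vanishes. Thus no $2$-dimensional $V$ yields a subalgebra, and a short check of the two remaining submodules $\mathfrak{z}\oplus V$ and $\mathfrak{m}$ (neither of which is bracket-closed, the first since $[\mathfrak{z},V]=JV\not\subseteq\mathfrak{z}\oplus V$, the second since $[\mathfrak{m},\mathfrak{m}]$ meets $\mathfrak{z}$) leaves exactly $\mathfrak{h}$, $\mathfrak{u}(1,1)=\mathfrak{h}\oplus\mathfrak{z}$, and $\mathfrak{g}$.

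The hard part will be the remaining intermediate case $\mathfrak{l}=\mathfrak{u}(1,1)$, that is $L=G_\ell\cong\U(1,1)$. This one is \emph{not} excluded by the lattice hypothesis alone: since $H=H_\ell$ is the derived group of $G_\ell$ and $G_\ell/H_\ell\cong\U(1)$ is compact, the lattice $\Gamma\cap H_\ell$ produced by Lemma~\ref{le:closedgeodesic} is automatically a lattice in $G_\ell$, so $\Gamma\cap G_\ell$ is a lattice in $G_\ell\ne G$. To rule this out I would use the extra output of Lemma~\ref{le:algebraic}, namely that $cLc^{-1}$ contains the subgroup generated by the conjugates $x_nHx_n^{-1}$ for $n\ge n_0$, which are not all equal because the curves $C_n$ are pairwise distinct; hence $L$ contains at least two distinct conjugates of $H$. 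But $\SU(1,1)=[\U(1,1),\U(1,1)]$ is the \emph{unique} subgroup of $\U(1,1)$ isomorphic to $\SU(1,1)$, so $\U(1,1)$ cannot contain two distinct conjugates of $H$. This forces $\mathfrak{l}=\mathfrak{g}$ and therefore $L=G$. In other words, the substantive content of the lemma is the subalgebra classification above, while the separation of $\mathfrak{u}(1,1)$ from $\mathfrak{g}$ must borrow the non-coincidence of the conjugates established just before the lemma is applied.
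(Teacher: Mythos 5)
Your proof is correct, and it takes a genuinely more careful route than the paper's own argument --- carefully enough, in fact, to expose a real gap there. The paper disposes of the lemma in two lines: since $\mathrm{Lie}(L)$ contains an element outside $\mathfrak{h}$, bracketing with suitable elements of $\mathfrak{h}$ generates $\mathrm{Lie}(G)$. In Case One this agrees with your irreducibility observation and is fine. In Case Two it fails for exactly the direction you isolate: if the extra element lies in $\mathfrak{z}=\mathrm{Lie}(Z(H))$, all brackets with $\mathfrak{h}$ vanish and one generates only $\mathfrak{u}(1,1)$. Your classification of the intermediate subalgebras (in particular using definiteness of $q$ on the multiplicity space to kill the $\PP^1(\RR)$-family of $2$-dimensional submodules of $\mathfrak{m}$) supplies the content the paper's sketch omits, and your observation that $L=G_\ell\cong\U(1,1)$ satisfies \emph{all} the stated hypotheses --- $H\subsetneq L\subsetneq G$, and $\Gamma\cap L$ a lattice because $\Gamma\cap H_\ell$ is already a lattice in $G_\ell$ by compactness of $G_\ell/H_\ell$ --- shows the lemma is literally false in Case Two as written. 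Your repair is the right one and matches how the lemma is actually used: the proof of Proposition~\ref{prop:equidistK} hands you, via Lemma~\ref{le:algebraic}, two distinct conjugates of $H$ inside $cLc^{-1}$, whereas $\U(1,1)$ contains a unique copy of $\SU(1,1)$ (its derived subgroup, since any $\mathfrak{sl}_2$-subalgebra of $\mathfrak{su}(1,1)\oplus\RR$ equals its own derived algebra and hence lands in $\mathfrak{su}(1,1)$). So the lemma should either carry the extra hypothesis that $L$ contains two distinct conjugates of $H$, or the exclusion of $\U(1,1)$ should be absorbed into the proof of the proposition.

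One small point to tighten: your argument that a complex structure preserving $q$ forces definiteness only yields ``definite or zero,'' so you should record why $q\ne 0$ --- either by the direct computation identifying the $\mathfrak{z}$-component of $[\mathfrak{m},\mathfrak{m}]$ with (a nonzero multiple of) $\Im h|_{\ell^\perp}$, or by noting that otherwise $\mathfrak{h}\oplus\mathfrak{m}$ would be a codimension-one subalgebra of the simple algebra $\mathfrak{su}(2,1)$, which does not exist. You are also right that in Case One the lattice hypothesis plays no role.
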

\par
\begin{proof} This is easily verified on the level of Lie algebras.
Since ${\rm Lie}(L)$ contains an element not in ${\rm Lie}(H)$, bracketing
with suitable elements of ${\rm Lie}(H)$ allows to produce 
a generating set of ${\rm Lie}(G)$.
\end{proof}

\section{The current of integration of a Shimura curve} \label{sec:current_of_integraion}

Any Shimura curve $C$, in fact any codimension one subvariety
of the Shimura surface $X$, defines a closed $(1,1)$-current on
$X$. On the other hand, the Shimura surfaces come with a
natural $(1,1)$-form, the K\"ahler form $\omega$. The aim of
this section is to translate the equidistribution result
(a convergence of measures) into a convergence statement for the
classes of these currents, suitably normalized. We start with the compact
case and explain at the end of this section the necessary modification
in the noncompact case. Recall that a $(1,1)$-current on a complex surface $X$ is a continuous linear functional on $A^{1,1}_c(X)$, the space of compactly supported $(1,1)$-forms on $X$.  This space $(A^{1,1}_c(X))^\vee$ contains both the complex curves $C\subset X$ and the smooth forms  $\eta\in A^{1,1}(X)$ by the formulas
$$
C \to (\alpha \to \int_C\alpha), \ \  \eta\to(\alpha\to  \int_X\eta\wedge\alpha) \ \ \text{ for all } \alpha\in A^{1,1}_c(X).
$$
The cohomology of $X$ can be computed either from the complex of forms or from the complex of currents.
Recall also that, if $X$ is K\"ahler$ $, $\omega$ denotes the K\"ahler form,  $\vol(X) =  \int_X \omega \wedge \omega$, 
that $\omega_C = \omega_X|_C$ is the K\"ahler form on $C$ and 
$\vol(C) = \int_{C} \omega_C$.
\par
\begin{prop} \label{prop:conv_currents}
Let  $X = \Gamma\backslash G/K$ be a smooth Shimura surface, let $g_n \in G$ be any sequence of points such that the 
 Shimura curves $C_n = \Gamma \backslash \Gamma g_n H/K$ 
are pairwise distinct. Then 
$$ C_n/\vol(C_n) \to \omega \quad \text{in} \quad A^{1,1}_c(X)^\vee, \quad \text{hence in} \quad H^{1,1}(X).$$ 
\end{prop}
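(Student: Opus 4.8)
The plan is to prove the convergence one test form at a time: fix a compactly supported $(1,1)$-form $\alpha$ on $X$ (in the compact case, an arbitrary smooth $(1,1)$-form) and show that $\frac{1}{\vol(C_n)}\int_{C_n}\alpha \to \frac{1}{\vol(X)}\int_X \omega\wedge\alpha$, which identifies the limit current with the K\"ahler form $\omega$ up to the scalar fixed by the volume normalization (with the conventions above this scalar is $1/\vol(X)$, and it is immaterial for the self-intersection application). The essential point is that $\int_{C_n}\alpha$ depends not only on where $C_n$ lands in $X$ but on which complex tangent directions it takes, so equidistribution on $X$ alone (Proposition~\ref{prop:equidist}) does not suffice; instead I would use the stronger Proposition~\ref{prop:equidistK} on a bundle of tangent directions.

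Concretely, first I would introduce the compact subgroup $K' \subset K$ stabilizing the complex tangent line $\ell_0 = T_o(Ho)$ of the standard $H$-orbit through the base point $o = eK$, so that $\Gamma\backslash G/K'$ fibers over $X$ with fiber $K/K'$ and parametrizes pairs $(p,\ell)$ consisting of a point $p\in X$ together with a complex tangent line $\ell\subset T_pX$ of the type occurring along Shimura curves. On this space I would define $f_\alpha(p,\ell) = \alpha|_\ell / \omega|_\ell$; since $\omega$ is K\"ahler, $\omega|_\ell>0$ on every complex line, so $f_\alpha$ is a bounded continuous function. Because $C_n$ is a complex curve, $\alpha|_{C_n} = f_\alpha\cdot\omega_{C_n}$, whence $\frac{1}{\vol(C_n)}\int_{C_n}\alpha = \int f_\alpha\, d\overline\nu_{C_n,K'}$, where $\overline\nu_{C_n,K'}$ is the probability measure on $\Gamma\backslash G/K'$ carried by the lift of $C_n$. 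Proposition~\ref{prop:equidistK} (applied with this $K'$) then gives $\int f_\alpha\,d\overline\nu_{C_n,K'} \to \int f_\alpha\, d\nu$, where $\nu$ is the Haar-type probability measure on $\Gamma\backslash G/K'$.

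It remains to identify the limit. Disintegrating $\nu$ along $\Gamma\backslash G/K'\to X$ yields $\nu_X$ on the base and the uniform ($K$-invariant) measure on each fiber $K/K'$, so $\int f_\alpha\,d\nu = \int_X (\text{average of } f_\alpha \text{ over } K/K')\, d\nu_X$. The fiber average is a pointwise linear-algebra computation: writing $\omega$ in a unitary frame and $\alpha$ via a Hermitian matrix $H$, one has $f_\alpha(p,\ell) = v^\ast H v$ for a unit vector $v$ spanning $\ell$, and averaging $v^\ast H v$ over the $K$-orbit of directions returns $\tfrac12\tr H = (\omega\wedge\alpha)/(\omega\wedge\omega)$ as a ratio of top-degree forms at $p$. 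Since $\nu_X = (\omega\wedge\omega)/\vol(X)$, this gives $\int f_\alpha\,d\nu = \frac{1}{\vol(X)}\int_X\omega\wedge\alpha$, as wanted. Convergence in $H^{1,1}(X)$ follows formally: $[C_n]$ and $\omega$ are closed, the pairing of a closed current against a closed form is continuous for the weak topology, and $H^{1,1}(X)$ is finite-dimensional, so Poincar\'e duality upgrades convergence of pairings to convergence of classes.

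The main obstacle is the fiber-average step, and specifically Case One. There the diagonally embedded $H\cong\SL_2(\RR)$ is tangent only to the ``balanced'' complex lines $\{(w,e^{i\phi}w)\}$, so the orbit $K/K'$ of directions is a single circle rather than all of $\PP(T_pX)=\PP^1$, and $K$ does not act transitively on directions. I would therefore check by hand that this non-transitive average over $\phi$ still annihilates the off-diagonal entries of $H$ and recovers $\tfrac12\tr H$; in Case Two the isotropy already contains $\U(2)$, acts transitively on $\PP^1$, and the computation is the standard one. A secondary technical point, relevant in the noncompact case, is to confirm that $f_\alpha$ is genuinely bounded and continuous on the quotient, so that the weak convergence of Proposition~\ref{prop:equidistK} may be applied to it.
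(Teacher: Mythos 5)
Your proposal is correct and follows essentially the same route as the paper: you pass to the bundle $\Gamma\backslash G/K'$ of tangent directions along $H$-orbits, define the bounded function $f_\alpha=\alpha|_\ell/\omega|_\ell$ (the paper's $\varphi_\alpha$), rewrite $\vol(C_n)^{-1}\int_{C_n}\alpha$ as the integral of $f_\alpha$ against the lifted measure, invoke Proposition~\ref{prop:equidistK}, and identify the limit by the fiber-average computation $\tfrac12\trace H=(\omega\wedge\alpha)/(\omega\wedge\omega)$ — including the correct observation that in Case One the average is over the circle of balanced directions $\sS T_xX$ rather than all of $\PP T_xX$, which is exactly the paper's Lemma~\ref{le:omegaalphaone}. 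The only (harmless) divergence is your explicit tracking of the factor $1/\vol(X)$ coming from using probability measures, which the paper suppresses in its normalization.
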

\par
This and the finite-dimensionality of the Picard group allows
to deduce our main result.
\par
\begin{cor} \label{cor:C2}
Let  $X = \Gamma\backslash G/K$ be a compact,  smooth Shimura surface, let $g_n \in G$ be any sequence of points such that 
 the Shimura curves $C_n = \Gamma \backslash g_m H$/K  
are pairwise distinct. Then 
$$C_n^2 \sim \vol(\Gamma \backslash \Gamma g_n H)^2 \quad \text{for} \quad n \to \infty.$$
In particular for any $M$, there are only finitely many Shimura curves
$C$ on $X$ with $C^2 < M$.
\end{cor}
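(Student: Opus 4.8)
The plan is to convert the current-level convergence of Proposition~\ref{prop:conv_currents} into a statement about the intersection form on the finite-dimensional cohomology $H^{1,1}(X)$, and then combine it with the divergence of volumes from Corollary~\ref{cor:vol}. First I would recall that on a smooth compact surface the self-intersection number of a (possibly singular) curve is computed from its cohomology class: $C_n^2 = \int_X [C_n] \wedge [C_n]$, where $[C_n] \in H^{1,1}(X)$ is represented by the current of integration along $C_n$. Writing $\alpha_n = [C_n]/\vol(C_n)$, the content of Proposition~\ref{prop:conv_currents} is that the currents $C_n/\vol(C_n)$ converge weakly in $A^{1,1}_c(X)^\vee$ to $\omega$. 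The key step is to upgrade this to convergence of cohomology classes, and this is exactly where finite-dimensionality is used: pairing a closed current with a closed form computes the cup product, so weak convergence gives $\alpha_n \cdot [\eta] \to [\omega] \cdot [\eta]$ for every class $[\eta] \in H^2(X)$. Since $H^2(X)$ is finite-dimensional and its intersection pairing is nondegenerate by Poincar\'e duality (Hodge index), this forces $\alpha_n \to [\omega]$ in $H^{1,1}(X)$.

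Next, the intersection form is a continuous quadratic form on the finite-dimensional space $H^2(X)$, so I would simply pass to the limit inside it: $\alpha_n^2 \to [\omega]^2 = \int_X \omega \wedge \omega = \vol(X)$. Unwinding the normalization, and using that $\vol(C_n) = \vol(\Gamma \backslash \Gamma g_n H)$ by the volume normalization fixed earlier, this reads
$$\frac{C_n^2}{\vol(C_n)^2} \longrightarrow \vol(X) > 0,$$
which is the asserted asymptotic $C_n^2 \sim \vol(\Gamma \backslash \Gamma g_n H)^2$ up to the positive constant $\vol(X)$ (whose precise value is irrelevant for what follows, only its strict positivity matters).

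Finally, for the finiteness statement I would argue by contradiction. If for some $M$ there were infinitely many Shimura curves with $C^2 < M$, I would enumerate them as a sequence of pairwise distinct curves $C_n$; by Proposition~\ref{prop:constructcurves} each has the form $\Gamma \backslash \Gamma g_n H / K$, so Corollary~\ref{cor:vol} yields $\vol(C_n) \to \infty$. Combined with the asymptotic above this gives $C_n^2 \to +\infty$, contradicting $C_n^2 < M$ for all $n$. I expect the only genuinely delicate point to be the passage from weak convergence of currents to convergence of the associated classes in $H^{1,1}(X)$; once that is secured, the remainder is continuity of a bilinear form on a finite-dimensional space together with the positivity of the limiting constant $\vol(X)$.
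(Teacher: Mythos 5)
Your proposal is correct and follows essentially the same route as the paper: both pass from the weak convergence of $C_n/\vol(C_n)$ to $\omega$ (Proposition~\ref{prop:conv_currents}) to convergence of the self-intersection numbers via Poincar\'e duality on the finite-dimensional $H^{1,1}(X)$ (the paper just makes this explicit with a dual basis), and then conclude with $\vol(C_n)\to\infty$ from Corollary~\ref{cor:vol}. Your observation that the limiting constant is $\vol(X)$ rather than $\vol(X)^2$ is a harmless normalization point that does not affect the finiteness conclusion.
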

\par
\begin{proof}
For the first statement, 
fix a basis $\gamma_0 = \omega, \gamma_1,\ldots,\gamma_s$ of
$H^{1,1}(X)$. Taking $\gamma_i$ for $i>1$ orthogonal to $\gamma_0$, we may
suppose that the dual basis is $\lambda^{-1} \omega = \gamma_0^\vee, 
\gamma_1^\vee,\ldots,\gamma_s^\vee$ for some $\lambda \in \CC$, in fact
$\lambda = \int_X \omega\wedge\omega = \vol(X)$. If $C$ is a curve in $X$, 
thus representing a $(1,1)$-class, the Poincar{\'e} dual is represented by
$$ {\rm PD}(C) = \sum_{i=0}^s \bigl(\int_C \gamma_i \bigr)\,\, \gamma_i^\vee.$$
Consequently, by Proposition~\ref{prop:conv_currents}
\ba
&\frac{1}{A_n^2} C_n \cdot C_n = \frac{1}{A_n^2} \int_{C_n} {\rm PD}(C_n)
=  \sum_{i=0}^s   \Bigl(\frac{1}{A_n}\int_{C_n} \gamma_i \Bigr) \Bigl(\frac{1}{A_n}\int_{C_n} \gamma_i^\vee \Bigr) \\
&\longrightarrow \sum_{i=0}^s  \Bigl(\int_X \omega\wedge\gamma_i  \Bigr) \Bigl( \int_X \omega\wedge\gamma_i^\vee\Bigr)\,
= \, \lambda^2.
\ea
\par
The second statement follows from the first and from Corollary~\ref{cor:vol}.
\end{proof}
\par
\medskip
\paragraph{\bf Integrating on the projectivized tangent bundle.}
We now prepare for the proof of Proposition~\ref{prop:conv_currents}.
For this purpose we work on the universal cover $\tX = G/K$ of $X$.
First of all, for any (two-dimensional) 
K\"ahler manifold $X$ here is a natural map 
$$\PP T\tX \to \Lambda_{1,1} T\tX = (\Lambda^{1,1} T^*\tX)^\vee$$
defined pointwise at any $x\in \tX$
by  $[v] \mapsto v \wedge \bar{v}/|v|^2$ for $v \in T_x\tX \setminus \{0\}$.
Dually, an element $\alpha \in (\Lambda^{1,1} T^*\tX)$ defines a real-valued function 
$$\varphi_\alpha: \PP T \tX \to \RR, \quad \varphi_\alpha ([v]) = 
\alpha \Bigl( \frac{v \wedge \bar{v}}{|v|^2}\Bigr).$$
Using this map we can write the intersection with $\alpha$
as an integral of a real-valued function against the volume form of $\PP T X$.   In Case Two $\PP T\tX = G/K'$ is a homogeneous space with an invariant volume, where $K' =  \U(1) \times\U(1)$.  In Case One we will need to pass to a $G$-invariant real sub-bundle of $\PP T \tX$ also of the form $G/K'$ for $K' = \U(1)$. 
\par
We start with Case Two. Recall that we scaled the K\"ahler form $\omega$ so that $\vol(X) = \int_X \omega \wedge \omega $.
\par
\begin{lemma} \label{le:omegaalpha}
Let $X$ be a two-dimensional K\"ahler manifold, choose a two from $\eta$ on $\PP TX$ that restricts 
to the area form $\eta_x$ of each fiber $\PP T_x X$, $x\in X$, scaled to give total area one to each fiber.  
Then, for all $(1,1)$-forms $\alpha$ on $X$ and for each $x\in X$ we have  
$$(\omega \wedge \alpha)_x = \Bigl(\int_{\PP T_x X} \varphi_\alpha \eta_x \Bigr) 
(\omega \wedge \omega)_x.$$
Therefore we have
$$ \int_X \omega\wedge\alpha = \int_{\PP T X}\varphi_\alpha \  \eta\wedge\omega\wedge\omega,$$
where we have written simply $\omega$ for the pull-back to $\PP T X$ of the form $\omega$ on $X$.
\end{lemma}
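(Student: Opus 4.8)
The plan is to reduce the asserted identity to a fiberwise (pointwise) statement and to prove that pointwise statement by an invariance-and-averaging argument, after which the global formula follows by integration over $X$. First I would fix $x \in X$ and work entirely in the Hermitian vector space $V = T_x X$, so that $\PP T_x X \cong \PP^1$ carries the normalized area form $\eta_x$, and note that $\omega$, the norm $|v|$, the volume $\omega\wedge\omega$, and $\eta_x$ are all invariant under the unitary group $\U(2)$ acting on $V$. Both sides of the claimed fiberwise identity, viewed as functions of the real $(1,1)$-form $\alpha$, are $\RR$-linear: on the left, $(\omega\wedge\omega)_x$ spans the one-dimensional space $\Lambda^{2,2}V^*$, so there is a unique scalar $c_\alpha$ with $(\omega\wedge\alpha)_x = c_\alpha\,(\omega\wedge\omega)_x$, and $\alpha\mapsto c_\alpha$ is linear; on the right, $\alpha\mapsto\int_{\PP T_x X}\varphi_\alpha\,\eta_x$ is manifestly linear. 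The crucial observation is that both of these functionals are $\U(2)$-invariant.

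The heart of the argument is the decomposition of the space of real $(1,1)$-forms on $V$ as a $\U(2)$-representation: it splits as $\RR\,\omega\oplus P$, where $P$ is the space of primitive forms. In complex dimension two, primitivity of $\alpha$ is exactly the condition $\omega\wedge\alpha = 0$, and $P$ is the three-dimensional real irreducible representation (traceless Hermitian forms, on which $\U(2)$ acts through $\SO(3)$). Both functionals vanish on $P$: the left one because $\omega\wedge\alpha = 0$ forces $c_\alpha = 0$, and the right one because a $\U(2)$-invariant linear functional restricted to the nontrivial irreducible representation $P$ must vanish. Hence each functional is determined by its value on $\omega$, where they agree: $c_\omega = 1$ trivially, while $\varphi_\omega\equiv 1$ on $\PP T_x X$ (the pairing sends $v\wedge\bar v$ to $|v|^2$, which cancels the denominator $|v|^2$, independently of the scale of $\omega$), so $\int_{\PP T_x X}\varphi_\omega\,\eta_x = 1$ as well since $\eta_x$ has total mass one. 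The two functionals therefore coincide, giving the pointwise identity.

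Finally I would integrate the pointwise identity over $X$ and invoke fiber integration for $\pi\colon \PP T X\to X$: since $\eta$ restricts to $\eta_x$ on each fiber and $\omega\wedge\omega$ is pulled back from $X$, the projection formula gives $\int_X\bigl(\int_{\PP T_x X}\varphi_\alpha\,\eta_x\bigr)\,\omega\wedge\omega = \int_{\PP T X}\varphi_\alpha\,\eta\wedge\omega\wedge\omega$, which is the asserted global formula. The step I expect to require the most care is the vanishing of $\int_{\PP T_x X}\varphi_\alpha\,\eta_x$ on the primitive part $P$: this is the essential content of the lemma, obtained either conceptually from the irreducibility of $P$ as above or, if one prefers, by an explicit integration of $\varphi_\alpha$ over $\PP^1$ in a unitary frame. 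The remaining normalization bookkeeping (that $\varphi_\omega\equiv 1$ and that each fiber has mass one) and the Fubini step are routine.
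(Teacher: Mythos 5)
Your proposal is correct, and for the key pointwise identity it takes a genuinely different route from the paper's. The paper fixes normal coordinates at $x$, writes $\alpha = \tfrac{i}{2}\sum\alpha_{i\bar j}\,dz_i\wedge d\bar z_j$, computes $(\omega\wedge\alpha)_x = \tfrac{1}{2}(\alpha_{1\bar1}+\alpha_{2\bar2})(\omega\wedge\omega)_x$ directly, and evaluates $\int_{\PP^1}\varphi_\alpha\,\eta_x$ term by term: the off-diagonal contribution vanishes by symmetry and the two diagonal integrals are equal and sum to $1$. You instead observe that both sides are $\U(2)$-invariant linear functionals of $\alpha$, decompose the real $(1,1)$-forms at $x$ as $\RR\,\omega\oplus P$ with $P$ the primitive (traceless Hermitian) part, note that $P$ is a nontrivial irreducible representation so both functionals vanish there by Schur, and then check agreement on $\omega$ via $\varphi_\omega\equiv1$. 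The two arguments establish the same linear-algebra fact --- that the trace of a Hermitian matrix is the average over the unit sphere of the associated Hermitian form, which the paper records in a remark after the proof --- but yours is coordinate-free and isolates exactly where invariance enters (in the paper it appears only as the phrase ``by symmetry reasons''), at the cost of invoking irreducibility of the primitive part; the paper's explicit integration additionally yields the value $\tfrac12$ of each diagonal integral, which your normalization against $\omega$ sidesteps. The concluding Fubini/fiber-integration step is the same in both.
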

\par
\begin{proof}
In suitable local coordinates at $x$, the K\"ahler form at $x$ is 
$\omega_x = \frac{i}{2} (dz_1\wedge d\bar{z}_1 + dz_2\wedge d\bar{z}_2) $.   
Writing  $\alpha = \frac{i}{2} \sum \alpha_{i\bar{j}} dz_i\wedge d\bar{z}_j$, we have (suppressing the factors of $\frac{i}{2}$)
 
$$(\omega \wedge \alpha)_x = (\alpha_{1,\bar{1}} + \alpha_{2,\bar{2}})
(dz_1\wedge d\bar{z}_1\wedge  dz_2\wedge d\bar{z}_2) =  \frac{\alpha_{1,\bar{1}} + \alpha_{2,\bar{2}}}{2}(\omega \wedge \omega)_x.$$
On the other hand, if we let $e_1,e_2$ denote the basis for $T_xX$ dual to $dz^1,dz^2$, and write $v = v_1 e_1 + v_2 e_2\in T_xX$, the first factor of the right hand side is
\ba
& \int_{\PP^1} \alpha\Bigl( \frac{(v_1e_1+v_2e_2)\wedge\overline{(v_1e_1+v_2e_2)}}{|v_1|^2 + |v_2|^2} \Bigr) \eta_x \\
&= \alpha_{1\bar{1}} \int_{\PP^1} \frac{|v_1|^2}{|v_1|^2 + |v_2|^2}\eta_x + \alpha_{2\bar{2}} \int_{\PP^1} \frac{|v_2|^2}{|v_1|^2 + |v_2|^2}\eta_x +  \int_{\PP^1} \frac{2\Im(\alpha_{1\bar{2}}\, v_1 \bar{v}_2)}{|v_1|^2 + |v_2|^2}\eta_x.
\ea
By symmetry reasons the last integral vanishes and the first two are equal and add to $1$, hence the first statement of the  lemma. The second follows from the first and Fubini's Theorem.
\end{proof}
\par
\begin{remark}
The first statement in the Lemma is equivalent to the well-known fact in linear algebra that the trace of a Hermitian matrix equals the average value over the unit sphere of the associated Hermitian form.
\end{remark}
\par
\begin{cor}\label{cor:omegaalphatwo}
It $X$ is a Shimura surface covered by the ball, then for all $(1,1)$-forms $\alpha$ on $X$ we have 
$$\int_X \omega\wedge\alpha = \int_{\PP T X} \varphi_\alpha\ d\nu_{\Gamma \backslash G/K'}$$
where $\nu_{\Gamma \backslash G/K'}$ is the volume form on $\PP T X$ introduced above.
\end{cor}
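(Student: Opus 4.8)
The plan is to deduce the Corollary from Lemma~\ref{le:omegaalpha} by identifying two natural measures on the total space $\PP T\tX$. Lemma~\ref{le:omegaalpha}, being a statement about arbitrary two-dimensional K\"ahler manifolds, already yields for $X$ itself
$$\int_X\omega\wedge\alpha=\int_{\PP TX}\varphi_\alpha\;\eta\wedge\omega\wedge\omega,$$
where $\eta$ is any two-form restricting to the normalized area form on each fiber $\PP T_xX\cong\PP^1$. Hence the entire content of the Corollary is the assertion that the top-degree form $\eta\wedge\omega\wedge\omega$ on $\PP TX$ equals the invariant volume $d\nu_{\Gamma\backslash G/K'}$ fixed in the volume-normalization paragraph of Section~1. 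I would therefore split the argument into the (formal) reduction to Lemma~\ref{le:omegaalpha} and the identification of these two measures.

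The identification rests on the homogeneity special to Case Two. Since $\PP T\tX=G/K'$ with $K'=\U(1)\times\U(1)$, the space $\PP T\tX$ is homogeneous and carries a unique $G$-invariant volume form up to a positive scalar. First I would check that $\eta\wedge\omega\wedge\omega$ is $G$-invariant: the pulled-back form $\omega$ is invariant because it comes from the symmetric-space K\"ahler metric, and the fiberwise area forms $\eta_x$ on $\PP(T_x\tX)$ are induced by that same invariant Hermitian metric, so they depend $G$-equivariantly on $x$. The ambiguity in the global choice of $\eta$ lies entirely in its horizontal part, which is annihilated upon wedging with the pulled-back four-form $\omega\wedge\omega$; thus $\eta\wedge\omega\wedge\omega$ is well defined and $G$-invariant. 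Being $\Gamma$-invariant as well, it descends to $\PP TX=\Gamma\backslash G/K'$, where it is proportional to $d\nu_{\Gamma\backslash G/K'}$.

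To pin the constant to $1$ it suffices to compare total masses of these two proportional invariant measures. By Fubini and the normalization of $\eta$,
$$\int_{\PP TX}\eta\wedge\omega\wedge\omega=\int_X\Bigl(\int_{\PP T_xX}\eta_x\Bigr)\omega\wedge\omega=\int_X\omega\wedge\omega=\vol(X),$$
while $\nu_{\Gamma\backslash G/K'}$ has total mass $\vol(X_{K'})=\vol(X)$ by the choice made in Section~1; two proportional measures with equal total mass coincide. The genuine mathematical input is thus already contained in Lemma~\ref{le:omegaalpha} and in the homogeneity $\PP T\tX=G/K'$ of Case Two; the step I expect to require the most care is the bookkeeping above, namely verifying the $G$-invariance of $\eta\wedge\omega\wedge\omega$ (handling the horizontal ambiguity of $\eta$) and reconciling it with the tower of Haar-measure normalizations underlying $\nu_{\Gamma\backslash G/K'}$.
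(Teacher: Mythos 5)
Your proposal is correct and follows essentially the same route as the paper: the paper's own proof likewise consists of invoking Lemma~\ref{le:omegaalpha} and then observing that $\eta\wedge\omega\wedge\omega$ is a $G$-invariant volume form on $\PP T\tX=G/K'$ whose normalization matches that of $\nu_{\Gamma\backslash G/K'}$. You merely spell out the invariance and mass-comparison details that the paper leaves implicit.
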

\begin{proof}
If $\tX =\BB^2 =  G/K$, then $\eta\wedge\omega\wedge\omega$ in Lemma~\ref{le:omegaalpha} is a 
$G$-invariant volume form on $\PP T \tX$. Moreover, $\omega$ and $\eta$ have been scaled to give
the correct normalization.
\end{proof}
\par
Now we address the corresponding statement in Case One. 
If the Shimura surface $X$ is covered by $\HH^2$, then $\PP T\tX$ is no longer a homogeneous space for $G$, but it has some natural homogeneous sub-bundles.  Equivalently, the action of $K$ on $\PP T_x \tX\cong \PP^1$  is not transitive, but has some distinguished orbits:  two zero-dimensional orbits, corresponding to the tangents to the two factors of $\HH^2$, and an orbit of real dimension one corresponding to the graphs of isometries between the two factors.  Explicitly, if we choose coordinates $z_1,z_2$ a above, this time adapted to the product structure of $\tX$, and with dual basis $e_1,e_2$ each tangent to one of the factors, and writing $v = v_1e_1 + v_2e_2$ as above, the action of $K\cong\U(1)\times \U(1)$ on $\PP T_x \tX \cong\PP^1$ leaves invariant the points with homogeneous coordinates $(1:0)$ and $(0:1)$ and the real submanifold $\{(v_1:v_2):|v_1| = |v_2|\} = \{(1:e^{i\theta})\}\cong S^1$.  
\par
Let us call this submanifold $\sS T_x \tX$ and let $\sS T\tX\cong G/K'$ denote the corresponding bundle 
over $\tX\cong G/K$ with fiber $K/K'\cong \sS T_x \tX \cong S^1$.  Then a calculation just as in 
the proof of Lemma~\ref{le:omegaalpha} gives us:
\par
\begin{lemma}\label{le:omegaalphaone}
Let $X$ be a Shimura surface covered by $\HH^2$,  choose a one form $\eta$ on $\sS TX$ that restricts to the angle form $\eta_x = d\theta$ of each fiber $\sS T_x X$,  scaled to give total area one to each fiber.  
Then, for any $(1,1)$ form $\alpha$ on $X$ and for each $x\in X$ we have  
$$(\omega \wedge \alpha)_x = \Bigl(\int_{\sS T_x X} \varphi_\alpha \eta_x \Bigr) 
(\omega \wedge \omega)_x.$$
Therefore we have
$$ \int_X \omega\wedge\alpha = \int_{\sS T X}\varphi_\alpha \  \eta\wedge\omega\wedge\omega = \int_{\sS T X}\varphi_\alpha \  d\nu_{\Gamma \backslash G/K'},$$
where $\nu_{\Gamma \backslash G/K'}$ is the volume form on $\sS T X$ introduced above. 
\end{lemma}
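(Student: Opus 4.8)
The plan is to run the same pointwise computation as in the proof of Lemma~\ref{le:omegaalpha}, with the integration domain changed from the full fiber $\PP^1$ to the distinguished circle $\sS T_x X$. First I would fix, at a point $x \in X$, holomorphic coordinates $z_1, z_2$ that are simultaneously adapted to the product structure of $\tX = \HH^2$ --- so that the dual basis $e_1, e_2$ is tangent to the two factors --- and normalized so that $\omega_x = \frac{i}{2}(dz_1 \wedge d\bar z_1 + dz_2 \wedge d\bar z_2)$. Both requirements can be met at once because the metric is a product of two hyperbolic metrics, hence diagonal in the product frame; one simply rescales each factor coordinate to make $\omega_x$ standard. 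With these coordinates the fiber $\sS T_x X$ is exactly $\{[v] : |v_1| = |v_2|\}$, which I parametrize as $v = e_1 + e^{i\theta}e_2$, so that $\eta_x = d\theta/2\pi$ has total mass one.

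The pointwise identity is then immediate. Writing $\alpha = \frac{i}{2}\sum\alpha_{i\bar j}\,dz_i\wedge d\bar z_j$, one has $(\omega\wedge\alpha)_x = \frac{\alpha_{1\bar 1}+\alpha_{2\bar 2}}{2}(\omega\wedge\omega)_x$ exactly as before, and
$$\varphi_\alpha([v]) = \frac{\alpha_{1\bar 1}|v_1|^2 + \alpha_{2\bar 2}|v_2|^2 + 2\Im(\alpha_{1\bar 2}v_1\bar v_2)}{|v_1|^2+|v_2|^2}.$$
Restricting to $\sS T_x X$, where $|v_1| = |v_2| = 1$, the two diagonal fractions are now identically $\frac{1}{2}$, so their combined contribution is $\frac{\alpha_{1\bar 1}+\alpha_{2\bar 2}}{2}$ with no integration needed, while the cross term becomes a multiple of $\Im(\alpha_{1\bar 2}e^{-i\theta})$, which integrates to zero against $d\theta$. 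This gives $\int_{\sS T_x X}\varphi_\alpha\,\eta_x = \frac{\alpha_{1\bar 1}+\alpha_{2\bar 2}}{2}$, which is the first displayed formula. This is in fact simpler than the $\PP^1$ case, where the diagonal terms required the symmetric integral $\int_{\PP^1}\frac{|v_1|^2}{|v_1|^2+|v_2|^2}\eta_x = \frac{1}{2}$; here they are constant along the fiber.

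The global formula follows by integrating the pointwise identity over $X$ and invoking Fubini's theorem, just as in Lemma~\ref{le:omegaalpha}. For the last equality I would reason as in Corollary~\ref{cor:omegaalphatwo}: since $\sS T\tX \cong G/K'$ with $K' = \U(1)$ is a homogeneous space, the form $\eta\wedge\omega\wedge\omega$ is $G$-invariant, hence a constant multiple of the invariant volume $\nu_{\Gamma\backslash G/K'}$, and the chosen normalizations of $\omega$ and $\eta$ force this constant to be one.

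The computation is routine; the only points requiring genuine care are the simultaneous choice of coordinates that are both product-adapted and metric-standard at $x$ --- this is what identifies the fiber $\sS T_x X$ with $\{|v_1| = |v_2|\}$ and makes the cross term genuinely angular, so that its vanishing comes from the $\theta$-integration rather than (as in Case Two) from the full $\PP^1$ symmetry --- and the verification that $\eta\wedge\omega\wedge\omega$ descends to the correctly normalized $G$-invariant volume form on the homogeneous sub-bundle $\sS T\tX = G/K'$.
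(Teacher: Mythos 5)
Your proposal is correct and is essentially the paper's own argument: the paper proves this lemma by saying the calculation is ``just as in the proof of Lemma~\ref{le:omegaalpha}'' with $\PP T_xX$ replaced by the circle $\sS T_xX$, which is exactly the computation you carry out (diagonal terms constant equal to $\tfrac12$ on $|v_1|=|v_2|$, cross term killed by the $\theta$-integration, then Fubini and $G$-invariance of $\eta\wedge\omega\wedge\omega$ on $G/K'$ for the global statement). Your explicit attention to choosing coordinates that are simultaneously product-adapted and metric-normalized is a worthwhile detail the paper leaves implicit.
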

\par
\begin{proof}[Proof of Proposition~\ref{prop:conv_currents}]
To show convergence in $H^{1,1}(X)$ it suffices to show that 
$$\frac1{\vol(C_n)} \int_{C_n} \alpha \to \int_X \omega \wedge \alpha$$
for any $\alpha \in H^{1,1}(X)$. In Case Two, by Corollary~\ref{cor:omegaalphatwo}
it suffices to show that 
$$\frac1{\vol(C_n)} \int_{C_n} \alpha \to \int_{\PP T X} \varphi_\alpha d\nu_{\Gamma \backslash G/K'}.$$
A local verification, just using the definition of $\varphi_\alpha$
and the fact that $\nu_{C_n,K'}$ was defined to give measure one to the fibers $K/K'$
implies that
$\int_{C_n} \alpha = \int_{\PP T C_n} \varphi_\alpha d\nu_{C_n,K'}$. Since
$\nu_{C_n,K'}$ is supported on $ {\PP T C_n} \subset {\PP T X}$, it is
thus sufficient to show that
$$ \int_{\PP T X} \varphi_\alpha d\nu_{C_n,K'} \to \int_{\PP T X} \varphi_\alpha d\nu_{\Gamma \backslash G/K'}.$$
We have reformulated our claim in terms of a convergence of measures, integrating
against a globally defined function $\varphi_\alpha$.  Proposition~\ref{prop:equidistK} completes the proof.
In Case One, the proof is the same, replacing  ${\PP T X}$ by ${\sS T X}$ and the the reference to 
Corollary~\ref{cor:omegaalphatwo} by Lemma~\ref{le:omegaalphaone}.
\end{proof}
\par
\paragraph{\bf The non-compact case.} Recall that we denoted by $Y$
a minimal resolution of the singularities of the Baily-Borel compactification
$X^\BB$. By \cite[Theorem~3.1 and Proposition~1.1]{mumford77} the K\"ahler class $\omega$ extends to a closed current on $Y$. 
Moreover $\omega \in \pi^*H^2(X^\BB,\QQ)$ by \cite[Proposition~3.4~(b)]{mumford77}. The statement of Proposition~\ref{prop:conv_currents} now reads
$$p_{B^\perp} (C_n)/\vol(C_n) \to \omega \quad \text{in} \quad  \pi^*H^2(X^\BB,\QQ),$$
where $p_B^\perp$ is the orthogonal projection onto the complement of $B$.
The same proof as above works. In order to show the analog 
$$(p_{B^\perp} C_n)^2 \sim \vol(\Gamma \backslash \Gamma g_n H)^2 \quad \text{for} \quad n \to \infty$$
of Corollary~\ref{cor:C2} we apply the Poincar\'e 
duality to $\pi^*H^2(X^\BB,\QQ)$. Since this is a perfect pairing, 
the proof of Corollary~\ref{cor:C2} applies without changes:
\par
\begin{theorem} \label{thm:main_noncompact}
For $X$ as above and for any real number  $M$ there are only finitely many  Shimura curves 
$C$ on  $X$ with $(p_{B^\perp} C)^2 < M$. 
\end{theorem}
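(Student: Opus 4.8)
The plan is to transplant the compact-case argument of Corollary~\ref{cor:C2} into the subspace $\pi^*H^2(X^\BB,\QQ)\subset H^2(Y,\QQ)$, on which the intersection form restricts to a perfect pairing. The three ingredients are the non-compact form of the current convergence, the resulting self-intersection asymptotic, and the volume growth of Corollary~\ref{cor:vol}.

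First I would assemble the structural facts already at hand. By \eqref{eq:H2dec} there is an orthogonal decomposition $H^2(Y,\QQ)=\pi^*H^2(X^\BB,\QQ)\oplus B$ in which $B$ is spanned by the cusp-resolution curves and carries a negative-definite intersection form. Since the cup product on $H^2(Y,\QQ)$ is non-degenerate and the two summands are orthogonal, the form restricts to a non-degenerate, hence perfect over $\QQ$, pairing on $\pi^*H^2(X^\BB,\QQ)$, and Poincar\'e duality respects the splitting. By \cite{mumford77} the K\"ahler class $\omega$ extends to a closed current on $Y$ whose class lies in $\pi^*H^2(X^\BB,\QQ)$, i.e.\ $\omega\in B^\perp$.

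Next I would invoke the non-compact form of Proposition~\ref{prop:conv_currents}, namely $p_{B^\perp}(C_n)/\vol(C_n)\to\omega$ in $\pi^*H^2(X^\BB,\QQ)$. Its proof is the compact-case argument verbatim: Proposition~\ref{prop:equidistK} already covers the cofinite (non-cocompact) case, since no escape of mass occurs once $Z(H)$ has compact image, and the reduction through the function $\varphi_\alpha$ on the projectivized, respectively spherized, tangent bundle uses only test forms $\alpha$ and data defined on the interior $X$. With this convergence in hand I would run the computation of Corollary~\ref{cor:C2} inside $\pi^*H^2(X^\BB,\QQ)$: choose a basis $\gamma_0=\omega,\gamma_1,\dots,\gamma_s$ with the $\gamma_i$ for $i\ge1$ orthogonal to $\omega$, pass to the dual basis for the now-perfect pairing, expand the Poincar\'e dual of $p_{B^\perp}(C_n)$, and take the limit using the current convergence. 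This yields $(p_{B^\perp}C_n)^2\sim\vol(\Gamma\backslash\Gamma g_nH)^2$. Combined with $\vol(C_n)\to\infty$ from Corollary~\ref{cor:vol}, it forces $(p_{B^\perp}C)^2\to\infty$ along any sequence of pairwise distinct Shimura curves, so only finitely many satisfy $(p_{B^\perp}C)^2<M$, which is Theorem~\ref{thm:main_noncompact}.

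The new content beyond the compact case, and the only genuine obstacle, is twofold: that $\omega$ extends to a closed current on $Y$ with class in $B^\perp$ (so the convergence has the right target), and that the intersection form restricts to a perfect pairing on $\pi^*H^2(X^\BB,\QQ)$ (so the duality bookkeeping can be carried out inside this subspace). Once both are granted, the projection $p_{B^\perp}$ confines the entire computation to $B^\perp$; the negative-definite summand $B$ never enters, and the remaining algebra is identical to the compact case.
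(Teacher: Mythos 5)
Your proposal is correct and follows essentially the same route as the paper: the orthogonal decomposition \eqref{eq:H2dec} with $B$ negative definite, Mumford's extension of $\omega$ as a closed current with class in $\pi^*H^2(X^{\rm BB},\QQ)$, the projected current convergence $p_{B^\perp}(C_n)/\vol(C_n)\to\omega$, and the rerun of the Corollary~\ref{cor:C2} duality computation inside the perfect pairing on $\pi^*H^2(X^{\rm BB},\QQ)$, concluded via Corollary~\ref{cor:vol}. No substantive differences from the paper's argument.
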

In particular, for the collection of compact Shimura curves in $X$ we obtain Theorem \ref{thm:intro_main}.

\bibliographystyle{halpha}
\bibliography{bib_SI}

\newcommand{\etalchar}[1]{$^{#1}$}
\begin{thebibliography}{BHK{\etalchar{+}}13}

\bibitem[BHK{\etalchar{+}}13]{negative}
Thomas Bauer, Brian Harbourne, Andreas~Leopold Knutsen, Alex K{\"u}ronya,
  Stefan M{\"u}ller-Stach, Xavier Roulleau, and Tomasz Szemberg.
\newblock Negative curves on algebraic surfaces.
\newblock {\em Duke Math. J.}, 162(10):1877--1894, 2013.

\bibitem[CU05]{ClUlEqui}
Laurent Clozel and Emmanuel Ullmo.
\newblock \'{E}quidistribution de sous-vari\'et\'es sp\'eciales.
\newblock {\em Ann. of Math. (2)}, 161(3):1571--1588, 2005.

\bibitem[EMS96]{EMSunipotent}
Alex Eskin, Shahar Mozes, and Nimish Shah.
\newblock Unipotent flows and counting lattice points on homogeneous varieties.
\newblock {\em Ann. of Math. (2)}, 143(2):253--299, 1996.

\bibitem[EMS97]{EMSnondiv}
A.~Eskin, S.~Mozes, and N.~Shah.
\newblock Non-divergence of translates of certain algebraic measures.
\newblock {\em Geom. Funct. Anal.}, 7(1):48--80, 1997.

\bibitem[EO06]{EOHecke}
Alex Eskin and Hee Oh.
\newblock Ergodic theoretic proof of equidistribution of {H}ecke points.
\newblock {\em Ergodic Theory Dynam. Systems}, 26(1):163--167, 2006.

\bibitem[GG09]{GaribaldiGille}
Skip Garibaldi and Philippe Gille.
\newblock Algebraic groups with few subgroups.
\newblock {\em J. Lond. Math. Soc. (2)}, 80(2):405--430, 2009.

\bibitem[Gra02]{granath}
Hakan Granath.
\newblock {\em On Quaternionic Shimura Surfaces}.
\newblock PhD thesis, Chalmers University, G\"oteborg, 2002.

\bibitem[HZ76]{HZ}
F.~Hirzebruch and D.~Zagier.
\newblock Intersection numbers of curves on {H}ilbert modular surfaces and
  modular forms of {N}ebentypus.
\newblock {\em Invent. Math.}, 36:57--113, 1976.

\bibitem[Kud78]{Kudla}
Stephen~S. Kudla.
\newblock Intersection numbers for quotients of the complex {$2$}-ball and
  {H}ilbert modular forms.
\newblock {\em Invent. Math.}, 47(2):189--208, 1978.

\bibitem[MS95]{MS}
Shahar Mozes and Nimish Shah.
\newblock On the space of ergodic invariant measures of unipotent flows.
\newblock {\em Ergodic Theory Dynam. Systems}, 15(1):149--159, 1995.

\bibitem[Mum77]{mumford77}
D.~Mumford.
\newblock Hirzebruch's proportionality theorem in the noncompact case.
\newblock {\em Invent. Math.}, 42:239--272, 1977.

\bibitem[PR94]{PlatRap}
Vladimir Platonov and Andrei Rapinchuk.
\newblock {\em Algebraic groups and number theory}, volume 139 of {\em Pure and
  Applied Mathematics}.
\newblock Academic Press, Inc., Boston, MA, 1994.
\newblock Translated from the 1991 Russian original by Rachel Rowen.

\bibitem[Ser94]{Serre}
Jean-Pierre Serre.
\newblock {\em Cohomologie galoisienne}, volume~5 of {\em Lecture Notes in
  Mathematics}.
\newblock Springer-Verlag, Berlin, fifth edition, 1994.

\bibitem[Tit66]{Tits}
J.~Tits.
\newblock Classification of algebraic semisimple groups.
\newblock In {\em Algebraic {G}roups and {D}iscontinuous {S}ubgroups ({P}roc.
  {S}ympos. {P}ure {M}ath., {B}oulder, {C}olo., 1965)}, pages 33--62. Amer.
  Math. Soc., Providence, R.I., 1966, 1966.

\bibitem[Ull07]{UlEquiII}
Emmanuel Ullmo.
\newblock Equidistribution de sous-vari\'et\'es sp\'eciales. {II}.
\newblock {\em J. Reine Angew. Math.}, 606:193--216, 2007.

\bibitem[vdG88]{vandergeer88}
G.~van~der Geer.
\newblock {\em Hilbert {M}odular {S}urfaces}, volume~16 of {\em Ergebnisse der
  Mathematik und ihrer Grenzgebiete (3)}.
\newblock Springer-Verlag, Berlin, 1988.

\bibitem[Vig80]{Vigneras}
Marie-France Vign{\'e}ras.
\newblock {\em Arithm\'etique des alg\`ebres de quaternions}, volume 800 of
  {\em Lecture Notes in Mathematics}.
\newblock Springer, Berlin, 1980.

\end{thebibliography}

\end{document}